\documentclass[11pt]{amsart}
\usepackage[margin=1in]{geometry} 
\usepackage[dvipsnames,usenames]{color}
\usepackage{hyperref}
\usepackage{cleveref}
\usepackage{graphicx}
\usepackage{epsfig}
\usepackage[latin1]{inputenc}
\usepackage{amsmath,amsfonts,amssymb,amsthm,amscd}
\usepackage{verbatim}
\usepackage{subfigure}
\usepackage{pinlabel}
\usepackage{stmaryrd}
\usepackage{enumerate,enumitem,array}
\usepackage{todonotes}
\usepackage{bm}
\usepackage{thmtools}
\usepackage{thm-restate}
\usepackage{comment}
\usepackage{tikz}
\usepackage{cancel}

\usepackage{mathtools}
\usepackage{xypic}

\newtheorem{theorem}{Theorem}[section]

\newtheorem{lemma}[theorem]{Lemma}
\newtheorem{proposition}[theorem]{Proposition}

\newtheorem{corollary}[theorem]{Corollary}

\theoremstyle{definition}

\theoremstyle{remark}
\newtheorem{remark}[theorem]{Remark}

\newcommand{\Z}{\mathbb{Z}}

\newcommand{\R}{\mathbb{R}}
\newcommand{\mfs}{\mathfrak{s}}
\newcommand{\mfsbar}{\overline{\mfs}}
\newcommand{\mft}{\mathfrak{t}}

\newcommand{\Sol}{M_{a,b}}
\newcommand{\Solprime}{M_{a',b'}}

\newcommand{\solextend}{\mathcal{S}_{b,a}}
\newcommand{\solextenda}{\mathcal{S}_{a}}
\newcommand{\solextendb}{\mathcal{S}_{b}}
\newcommand{\solnoextend}{\mathcal{S}_{\emptyset}}

\newcommand{\wa}{\iota_{W_a}}
\newcommand{\wb}{\iota_{W_{-b}}}
\newcommand{\da}{\iota_{D_a}}
\newcommand{\db}{\iota_{D_{-b}}}
\DeclareMathOperator{\Ima}{Im}

\setlength{\marginparwidth}{2cm}

\title{Homology cobordism for some Sol manifolds}

\author{Tye Lidman}
\address{North Carolina State University}
\email{tlid@math.ncsu.edu}
\thanks{TL was supported by NSF grants DMS-2105469, DMS-2506277, and a Simons Travel grant}

\author{Juanita Pinz\'{o}n-Caicedo}
\address{University of Notre Dame}
\email{jpinzonc@nd.edu}
\thanks{JPC was partially supported by a Simons Travel  grant}

\begin{document}
\maketitle 

\begin{abstract}
We show that if $Y_1,Y_2$ are 3-manifolds that admit Sol geometry and have first homology group of order 16, then $Y_1,Y_2$ are integer homology cobordant if and only if they are homeomorphic.
\end{abstract}

\section{Introduction}

The existence of homology cobordisms between 3-manifolds has been an important problem in geometric topology for many decades. Indeed, Galewski-Stern~\cite{gs} and Matumoto~\cite{matumoto} reduced the high-dimensional Triangulation Conjecture to the existence of order 2 elements in the (3+1)-dimensional homology cobordism group.  Manolescu~\cite{manolescu} completed this program by disproving the high-dimensional Triangulation Conjecture using a homology cobordism invariant he developed from $Pin(2)$-equivariant Seiberg-Witten Floer theory.  In general, tools originating from gauge theory and or Floer theory have been quite useful for studying problems in homology cobordism.  For example, Fintushel-Stern~\cite{fs} carried out an integer homology cobordism classification of lens spaces using $SO(3)$ moduli spaces of self-dual and anti-self-dual connections. Doig and Wehrli~\cite{doig-wehrli} gave an alternate proof more recently using the $d$-invariants from Heegaard Floer homology.  While the full classification of all 3-manifolds according to homology cobordism seems out of reach, restricting the classification to subsets of 3-manifolds with constrained geometric types seems plausible.  One could for example consider only 3-manifolds with spherical geometry, and with non-cyclic fundamental group. These can be further separated into icosahedral, tetrahedral, octahedral, or dihedral. The problem is trivial for the first three types, since there is at most one manifold with fixed order of first homology. For the case of dihedral manifolds, much can be deduced from the work of Doig~\cite{doig}, who gives a recursive formula for the Heegaard Floer $d$-invariants, and the work of Ballinger-Ni-Ochse-Vafaee~\cite{prism1,prism2,prism3}, who describe the class of dihedral manifolds realizable by surgeries on knots in $S^3$. One could then expand their focus to consider Seifert fibered manifolds in general, or even hyperbolic manifolds. At the time of writing, the former remains highly incomplete, and the latter almost perfectly intractable. We thus instead focus on the last of the Thurston geometries: we classify integer homology cobordism for the Sol manifolds with smallest first homology, order 16. To emphasize how important it is to work with $\Z$-homology, we observe below (see \Cref{sec:rational-sol-dihedral}) that every Sol rational homology sphere with $|H_1| = 16$ bounds a rational homology ball!  Our main result is the following:
\begin{theorem}\label{thm:main}
Let $Y_1$ and $Y_2$ be two Sol rational homology spheres with $|H_1(Y_i)| = 16$.  Then, $Y_1$ and $Y_2$ are integrally homology cobordant if and only if they are orientation-preserving homeomorphic.   
\end{theorem}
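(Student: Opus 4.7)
The plan is to obstruct integer homology cobordism between non-homeomorphic Sol rational homology spheres with $|H_1|=16$ by means of Heegaard Floer correction terms. Recall that for an integer homology cobordism $W$ from $Y_1$ to $Y_2$, every $\mathrm{spin}^c$ structure on $W$ restricts to structures on the two boundaries with equal $d$-invariants, so the multiset $\{d(Y_i,\mathfrak{s})\}$ is an invariant of the oriented integer cobordism class, and the sub-multiset on spin structures is preserved as well. The orientation-sensitivity of the conclusion is visible in the sign rule $d(-Y,\mathfrak{s})=-d(Y,\mathfrak{s})$.

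First I would enumerate the manifolds. Because every Anosov torus bundle over $S^1$ has $H_1$ with a free summand (by Mayer--Vietoris for the mapping torus), the Sol rational homology spheres with $|H_1|=16$ must be the unions of two twisted $I$-bundles over the Klein bottle glued along their common torus boundary. These form the two-parameter family $M_{a,b}$ of the preamble; I would solve $|H_1(M_{a,b})|=16$ modulo the equivalence relation corresponding to orientation-preserving homeomorphism to produce the finite and explicit list of homeomorphism types.

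Next I would compute the correction terms. Each $M_{a,b}$ is a graph manifold with two Seifert pieces (each a twisted $I$-bundle over the Klein bottle, fibered over the orbifold $D^2(2,2)$) glued along the JSJ torus, so it admits a plumbing description; after arranging this plumbing to be negative definite and applying the Ozsv\'ath--Szab\'o algorithm, one extracts $d(\mathfrak{s})$ for every $\mathrm{spin}^c$ structure. The natural four-manifold fillings suggested by the preamble (the $W_a, W_{-b}, D_a, D_{-b}$) allow these computations to be organized by which $\mathrm{spin}^c$ structures extend over which filling, yielding sharp $d$-invariant inequalities and a clean tabulation of the correction terms.

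The main obstacle is the pairwise comparison. Two distinct manifolds on the list could in principle share the same unordered multiset of $d$-invariants, in which case finer invariants are needed---the involutive Hendricks--Manolescu correction terms $\underline{d}, \bar{d}$, Manolescu's Pin(2)-equivariant $\alpha, \beta, \gamma$ on spin structures, or $d$-invariants of the double branched cover (itself a torus bundle with its own Floer-theoretic invariants). Given the small cardinality of the list I expect the basic $d$-invariant multisets with the spin-structure refinement to separate most pairs, leaving only a handful requiring more refined technology.
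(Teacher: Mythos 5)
Your overall framework --- realize the manifolds as unions of two twisted $I$-bundles over the Klein bottle, extract $d$-invariants organized by which spin$^c$ structures extend over the fillings $W_a$, $W_{-b}$, and compare multisets refined by the spin (self-conjugate) structures --- matches the paper's strategy. But there are three genuine gaps. First, the list of homeomorphism types is not finite: the condition $|H_1|=16$ only forces $|c|=1$ in the gluing matrix, leaving the full two-parameter infinite family $\Sol$, $a,b\in\Z$. So the endgame cannot be a finite case check, and the expectation that ``only a handful'' of pairs might need refined technology rests on a false premise; a uniform argument in $a,b$ is required. Second, the claim that each $\Sol$ admits a negative definite plumbing to which the Ozsv\'ath--Szab\'o algorithm applies is asserted, not proved; the natural splice diagram has vertices of both signs, and the paper deliberately avoids any direct computation --- it obtains only 12 of the 16 correction terms (from Doig's formula for the dihedral manifolds transported through the rational homology cobordisms $W_a$, $W_{-b}$) and recovers the sum of all 16 via Rustamov's formula together with the splice additivity of the Casson--Walker invariant, giving $\sum d = 2(a-b)$.

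Third, and most seriously, the comparison step --- which is where essentially all of the work in the paper lies --- is not carried out. Deferring the residual cases to involutive or $Pin(2)$-equivariant invariants is not a proof, and those tools are not obviously decisive here: every $\Sol$ with $|H_1|=16$ bounds a rational homology ball (\Cref{sol-ball}), so any invariant must be exploited through the full affine $H^2$-structure on $Spin^c(\Sol)$ rather than one structure at a time. The paper's actual argument is a parity case analysis on $(a,b)$: it determines which of the four 4-element blocks $\solextend$, $\solextenda$, $\solextendb$, $\solnoextend$ carry the self-conjugate spin$^c$ structures, computes the first Chern class on each block, and shows that the constraints that a homology cobordism preserve self-conjugacy and the $c_1$-block structure, combined with $a-b=a'-b'$, force $(a,b)=(a',b')$ up to the symmetry $\Sol\cong M_{-b,-a}$. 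You would need to supply an argument of this kind, valid for all integer parameters simultaneously, to close the proof.
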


Our strategy to prove Theorem~\ref{thm:main} can be described as follows. First, for $Y$ a Sol rational homology sphere satisfying $|H_1(Y)| = 16$, we consider the set of $d$-invariants of $Y$, namely, the image of the function $d_{Y}:Spin^c(Y_i) \to \mathbb{Q}$ that assigns to each $\mft\in Spin^c(Y)$ the grading of the generator of the Heegaard Floer group $\widehat{HF}(Y,\mft)$. Then, if $Y'$ is another Sol rational homology sphere satisfying $|H_1(Y')| = 16$, we show that the functions $d_Y$ and $d_{Y'}$ are necessarily different. We do so after exhibiting rational homology cobordisms between $Y$ and two dihedral manifolds of the form $D_{-b}=S^2\left (0;(2,1),(2,-1) ,(b,-1)\right )$. Paired with Doig's results from \cite{doig}, the existence of these rational homology cobordisms allows us to compute 12 of the 16 $d$-invariants of $Y$.  We then rely on a careful analysis of the structural properties of the set of spin$^c$ structures, their first Chern classes, and their $d$-invariants to uniquely determine the Sol manifold.  While we believe a version of this general strategy should extend to Sol rational homology spheres with larger first homology, we do not pursue this here. The main issue is that larger homology groups for Sol manifolds will be paired via rational homology cobordism to dihedral manifolds with commensurately larger first homology, and the $d$-invariants of these are harder to work with.


\section{Background on Sol and dihedral manifolds}
Our goal is to understand the $d$-invariants of Sol manifolds.  While we do not completely compute these, we can still extract a great deal of information. In order to do this, we need to relate Sol manifolds to dihedral manifolds, so we begin with a review of these simpler spaces.  

\subsection{Dihedral manifolds}\label{dihedral}
A 3-manifold is called spherical if it can be obtained as a quotient of $S^3$ by a finite subgroup $\Gamma$ of $SO(4)$, and all spherical manifolds are L-spaces \cite{OSLens, KMOS}. If the group $\Gamma$ is a central extension of a dihedral group, then the manifold is called dihedral\footnote{Some authors also use the term prism to describe these manifolds.}. These  manifolds admit two different types of Seifert fibrations, one with base $\R P^2$ and another with base $S^2$, and are examples of closed and oriented Seifert fibered spaces with finite but noncyclic fundamental group. See \cite[5.4 and 6.2]{orlik} for more details. For the purposes of this article, it is enough to focus on the case of base $S^2$. Namely, a dihedral manifold will be the Seifert fibered manifold, shown in Figure~\ref{fig:dihedral}, with the following Seifert invariants:%
\begin{equation}
D_{-b/c} = S^2\left (0;(2,1),(2,-1) ,(b,-c)\right ).\footnote{When $|b| = 1$, the manifold $D_{-b}$ is a lens space.  The actual geometric type of this manifold is not so important for the proof, so for ease of exposition we view these as degenerate dihedral manifolds.}
\end{equation}
Notice that in this case we have $e\left(D_{-b/c}\right)=c/b$. The following lemma is well-known, but we include it for future reference.
\begin{lemma}[Seifert]\label{prop:H1(dih)} Let $D_{-b/c}= S^2\left (0;(2,1),(2,-1) ,(b,-c)\right )$.  Then $$H_1(D_{-b/c})=\begin{cases}
\mathbb{Z}/2 \oplus \mathbb{Z}/2c & \text{ if } b\equiv 0 \pmod{2},\\
\mathbb{Z}/4c & \text{ otherwise.}
\end{cases}$$
\end{lemma}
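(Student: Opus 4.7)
The plan is to compute $H_1$ directly from the standard presentation of the fundamental group of a Seifert fibered space over $S^2$. Recall that if $Y = S^2(0; (a_1,b_1), \dots, (a_n,b_n))$, then abelianizing the usual presentation of $\pi_1(Y)$ yields generators $x_1, \dots, x_n, h$ (with $h$ the class of the regular fiber) subject to the relations $a_i x_i + b_i h = 0$ and $x_1 + \cdots + x_n = 0$. So the strategy is just: write down the resulting presentation matrix for $D_{-b/c}$, reduce it, and compute its Smith normal form.

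Concretely, for $D_{-b/c}=S^2(0;(2,1),(2,-1),(b,-c))$ we get the relations
$$2x_1+h=0, \qquad 2x_2-h=0, \qquad bx_3-ch=0, \qquad x_1+x_2+x_3=0.$$
The first two already force $h = 2x_2$ and $x_1 = -x_2$. Plugging into $x_1+x_2+x_3 = 0$ gives $x_3 = 0$? No: it gives $-x_2 + x_2 + x_3 = 0$, i.e. $x_3$ is not forced, but we also need $2x_1 + h = 0$, i.e. $-2x_2 + 2x_2 = 0$, automatic. Instead I would eliminate $x_1$ using the last relation and substitute $h = 2x_2$ into the first, yielding $2x_3 = 0$. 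Substituting $h = 2x_2$ into $bx_3 - ch = 0$ gives $-2c x_2 + b x_3 = 0$, which modulo $2x_3 = 0$ becomes $-2c x_2 + (b - 2c)x_3 = 0$. So $H_1(D_{-b/c})$ is presented by generators $x_2, x_3$ with matrix
$$A = \begin{pmatrix} 0 & 2 \\ -2c & b-2c \end{pmatrix}, \qquad \det A = 4c,$$
confirming $|H_1| = 4c$ in both cases.

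The final step is to read off the Smith normal form of $A$, for which only the parity of $b$ matters. If $b$ is even, all four entries of $A$ are even, so the first invariant factor is $2$ and the second must be $4c/2 = 2c$, giving $H_1 \cong \mathbb{Z}/2 \oplus \mathbb{Z}/2c$. If $b$ is odd, then $b-2c$ is odd, so the gcd of the entries of $A$ is $1$; the first invariant factor is $1$ and the second is $4c$, giving $H_1 \cong \mathbb{Z}/4c$.

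There is no real obstacle here; the only subtlety is being careful with signs and with the reduction order when eliminating $x_1$ and $h$. Everything else is a mechanical Smith-form calculation on a $2\times 2$ matrix, and the two cases of the conclusion correspond exactly to whether $b$ contributes an extra factor of $2$ to the gcd of the reduced relation matrix.
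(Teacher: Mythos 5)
Your proposal is correct and takes essentially the same approach as the paper: both abelianize the standard presentation of $\pi_1$ coming from the Seifert invariants $S^2\left(0;(2,1),(2,-1),(b,-c)\right)$ (the paper's $4\times 4$ matrix encodes exactly your four relations) and then read off the Smith normal form, with the parity of $b$ deciding whether the gcd of the reduced $2\times 2$ relation matrix is $1$ or $2$. The only blemish is the passing claim that the first two relations ``force $x_1=-x_2$'' --- they only give $2(x_1+x_2)=0$ --- but you immediately abandon that route, and the elimination you actually carry out (via $x_1=-x_2-x_3$ and $h=2x_2$) is correct.
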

\begin{proof} Following \cite[6.2]{neumann}, using the Seifert fibration $D_{-b/c}= S^2\left (0;(2,1),(2,-1) ,(b,-c)\right )$, one computes 
$H_1(D_{-b/c})$ as the cokernel of the matrix $$\left(\begin{array}{rrrr}1 & 1 & 1 & 0 \\2 & 0 & 0 & 1 \\0 & -2 & 0 & 1 \\0 & 0 & -b & c\end{array}\right).$$ 
See \Cref{prop:cobs} for a computation using a different decomposition of $D_{-b/c}$.
\end{proof}

The proof of our main result relies on specific computations for the $d$-invariants of dihedral manifolds. We will only be interested in the case $c = 1$, and so for an integer $n$, we use $D_{-n}$ to denote the manifold with Seifert invariants $S^2\left (0;(2,1),(2,-1) ,(n,-1)\right)$. By an abuse of notation, we will use $D_0$ to denote $\R P^3\# \overline{\R P^3}$, which is consistent with \Cref{fig:dihedral}.  

\begin{figure}[h]
\centering
\def\svgwidth{0.325\textwidth}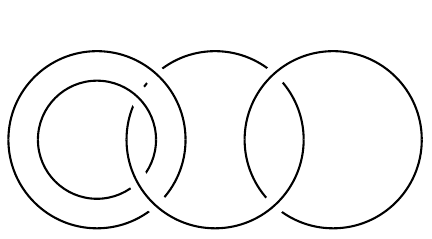
\caption{Surgery diagram for the dihedral manifold $D_{-b/c}= S^2\left (0;(2,1),(2,-1) ,(b,-c)\right )$.}\label{fig:dihedral}
\end{figure}

\subsection{Sol}
The Lie group $Sol$ consists of the pair $(\R^3,\star)$ where $\star$ is the binary operation defined by $$\vec{v}\star\vec{x}=(e^{v_3}x_1+v_1,e^{-v_3}x_2+v_2,x_3+v_3).$$ A 3-manifold $M$ admits a $Sol$ geometry if it can be realized as a quotient $Sol/\Gamma$ for $\Gamma$ a subgroup of $Isom(Sol)$. In this article we will use a more topological description of rational homology spheres that admits a Sol geometry, which we now describe.  Let $N$ be the twisted $I$-bundle over the Klein bottle. Two copies of $N$ glued together by an orientation-reversing self-diffeomorphism $\phi$ of the 2-torus $\partial N$ is a closed orientable 3-manifold $S_\phi$ sometimes referred to as a torus semi-bundle. The manifold $N$ can be described in the following ways:
\begin{enumerate}
\item the exterior of a knot $J$ in $\R P^3\#\overline{\R P^3}$ as shown in \Cref{fig:I-bundle}, 
\item a Seifert fibered space with base orbifold $D^2(2,2)$,
\item a Seifert fibered space with base orbifold a M\"obius band and no cone points.
\end{enumerate}

Using the description in \Cref{fig:I-bundle}, we obtain a fixed orientation on $N$, and hence an orientation of $S_\phi$.  If the gluing $\phi$ identifies fibers of either of the two Seifert fibrations, then the 3-manifold $S_\phi$ is Seifert fibered. In any other case, $S_\phi$ admits a Sol geometry. To be more precise, denote by $\{\mu,\lambda\}$ the meridional-longitude pair of the knot $J$ regarded as a knot in $S^3$, and use the ordered set to get a basis for $H_1(\partial N)$. The curves $\mu,\lambda$ are precisely the fibers of the two Seifert structures on $N$, and so the map $\phi$ can be identified with a matrix $A_\phi=\left(\begin{smallmatrix}a & c \\d & b\end{smallmatrix}\right)$ with determinant $-1$.  Whenever $a,b,c,d\neq 0$, the matrix $A_\phi$ defines a Sol manifold, and in fact every oriented Sol rational homology sphere can be described in this way (see \cite[Theorem 5.2]{ScottGeometries} and \cite[Section 9]{BoyerRolfsenWiest}).  In addition, this choice of basis for $H_1(\partial N)$ can be extended to a basis for $H_1(N)$, and with that, a matrix presentation for the map $\iota: H_1(\partial N)\to H_1(N)$ induced by inclusion.  More precisely, we have
\begin{equation}\label{H1(N)}
H_1(N)=\langle y, \mu,\lambda \mid 2\lambda=0,\; 2y+\mu=0\rangle = \langle y, \lambda \mid 2\lambda=0\rangle = \Z\oplus\Z/2,
\end{equation}
where $y$ is the meridian of the 0-framed component and so $\iota$ has matrix presentation $\left(\begin{smallmatrix}-2 & 0 \\\phantom{-}0 & 1\end{smallmatrix}\right)$.  
\begin{figure}[h]
\centering
\def\svgwidth{0.1375\textwidth}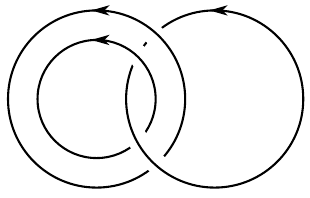
\caption{The twisted $I$-bundle over the Klein bottle represented as the exterior of the knot $J$ in $\R P^3\#\overline{\R P^3}$.}\label{fig:I-bundle}
\end{figure}

Our proof of \Cref{thm:main} involves a careful analysis of the Spin$^c$ structures of Sol rational homology spheres. Since these correspond (non-canonically) with the first homology group with integer coefficients, we start with the following computation:
\begin{lemma}\label{prop:H1(sol)} Let $S_\phi$ be a Sol rational homology sphere determined by a gluing matrix $A_\phi=\left(\begin{smallmatrix}a & c \\d & b\end{smallmatrix}\right)$ with $\det\left(A_\phi\right)=-1$. Then $$H_1(S_\phi)=\begin{cases}
\mathbb{Z}/2 \oplus \mathbb{Z}/2 \oplus \mathbb{Z}/4c & \text{ if } d\equiv 0 \pmod{2}, \\
\mathbb{Z}/4 \oplus \mathbb{Z}/4c & \text{ otherwise.}
\end{cases}$$
\end{lemma}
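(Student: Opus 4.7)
The plan is to realize $H_1(S_\phi)$ as the cokernel of the Mayer-Vietoris map associated to the decomposition $S_\phi = N_1 \cup_\phi N_2$ into two copies of the twisted $I$-bundle. Since $S_\phi$ is a rational homology sphere, $H_2(S_\phi) = 0$, and the Mayer-Vietoris sequence collapses to a short exact sequence
\[
0 \longrightarrow H_1(\partial N) \xrightarrow{\alpha} H_1(N_1) \oplus H_1(N_2) \longrightarrow H_1(S_\phi) \longrightarrow 0,
\]
where $\alpha(x) = (\iota(x),\, -\iota(\phi_*(x)))$. Thus $H_1(S_\phi)$ will be computed as the cokernel of an explicit $4 \times 4$ integer presentation matrix.

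To write down $\alpha$, I would use the bases $\{\mu,\lambda\}$ on $H_1(\partial N)$ and $\{y_i,\lambda_i\}$ on $H_1(N_i)$ coming from~\eqref{H1(N)}, and interpret the matrix $A_\phi = \left(\begin{smallmatrix}a & c\\ d & b\end{smallmatrix}\right)$ so that $\phi_*\mu = a\mu + d\lambda$ and $\phi_*\lambda = c\mu + b\lambda$. Applying the inclusion formula $\iota(\mu) = -2y$, $\iota(\lambda) = \lambda$ in both copies yields the four relations
\[
-2y_1 + 2a y_2 - d\lambda_2 = 0, \qquad \lambda_1 + 2c y_2 - b\lambda_2 = 0, \qquad 2\lambda_1 = 0, \qquad 2\lambda_2 = 0.
\]
Using the second relation to eliminate $\lambda_1$ and substituting into $2\lambda_1 = 0$ gives $-4cy_2 + 2b\lambda_2 = 0$, which (since $2\lambda_2 = 0$) simplifies to $4cy_2 = 0$. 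The presentation thus reduces to three generators $y_1, y_2, \lambda_2$ with relations $2\lambda_2 = 0$, $4cy_2 = 0$, and $-2y_1 + 2ay_2 - d\lambda_2 = 0$, already giving a presentation matrix of determinant $\pm 16c$, consistent with $|H_1(S_\phi)| = 16|c|$.

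The two cases in the statement then emerge from the parity of $d$. If $d$ is even, I would write $d = 2d'$ and change variables to $y_1' = y_1 - ay_2 + d'\lambda_2$; the mixed relation becomes $2y_1' = 0$, yielding $H_1(S_\phi) \cong \mathbb{Z}/2 \oplus \mathbb{Z}/2 \oplus \mathbb{Z}/4c$. If $d$ is odd, then using $2\lambda_2 = 0$ we have $d\lambda_2 = \lambda_2$, so the mixed relation solves for $\lambda_2 = -2y_1 + 2ay_2$; doubling gives $-4y_1 + 4ay_2 = 0$. Substituting reduces the presentation to the $2\times 2$ matrix $\left(\begin{smallmatrix}-4 & 4a\\ 0 & 4c\end{smallmatrix}\right)$, whose Smith normal form is $\operatorname{diag}(4,\, 4c)$ via the column operation $C_2 \mapsto C_2 + aC_1$, yielding $\mathbb{Z}/4 \oplus \mathbb{Z}/4c$.

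The main difficulty is not conceptual but bookkeeping: fixing the conventions for $\phi_*$ and the Mayer-Vietoris sign so that the presentation matrix is correct. Once those are pinned down, the dichotomy on the parity of $d$ is forced, since only in the odd case does the relation $-2y_1 + 2ay_2 - d\lambda_2 = 0$ allow $\lambda_2$ to be expressed as an integer combination of the $y_i$, which is precisely what collapses the two $\mathbb{Z}/2$ summands into a single $\mathbb{Z}/4$.
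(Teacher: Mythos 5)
Your proposal is correct and follows essentially the same route as the paper: a Mayer--Vietoris argument for the decomposition $S_\phi = N_1\cup_\phi N_2$, a $4\times 4$ presentation matrix built from $\iota(\mu)=-2y$, $\iota(\lambda)=\lambda$ and the gluing matrix, and a parity-of-$d$ case analysis. The only difference is that you carry out the Smith normal form reduction explicitly by hand (which checks out, including the determinant $16c$), where the paper simply records the resulting diagonal forms $(1,2,2,4c)$ and $(1,1,4,4c)$.
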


\begin{proof}
The Mayer-Vietoris sequence applied to the decomposition $S_\phi=N_1\cup_\phi N_2$ shows that $H_1(S_\phi)$ is the cokernel of the map $H_1(\partial N_2)\to H_1(N_1)\oplus H_1(N_2)$ given by $x\to (\iota_1\circ\phi(x),\iota_2(x))$, where $\iota_j$ denotes the map on first homology induced by inclusion. The meridian-longitude pair of the knot $J$ (regarded as a knot in $S^3$ as in \Cref{fig:I-bundle}) forms a basis for $H_1(\partial N)$.  As a consequence of the computations from \Cref{H1(N)}, $H_1(S_\phi)$ has matrix presentation $$\left[\begin{array}{rrrr}
0 & 0 & -2 a & -2 c \\
2 & 0 & d & b \\
0 & 0 & -2 & 0 \\
0 & 2 & 0 & 1
\end{array}\right] $$

If $d\equiv 0$, this matrix has Smith normal form given by the $4\times 4$ diagonal matrix with diagonal $(1,2,2,4c)$. Otherwise, the Smith normal form has diagonal $(1,1,4,4c)$. 
\end{proof}

\begin{remark}\label{rmk:c>0}
The above argument shows that $|H_1(S_\phi)| = 16|c|$.  It is natural to ask if the sign of $c$ affects the Sol manifold.  Note that $N$ admits an orientation-preserving diffeomorphism that acts by $-1$ on $\partial N$, as seen from the strong inversion on $J$ in \Cref{fig:I-bundle}.  Therefore, any Sol manifold is orientation-preserving diffeomorphic to some $S_\phi$ with $c \geq 0$. (For more details, see \cite{bgw}.)    
\end{remark}

\begin{lemma}\label{woop} The Sol manifolds $S_\phi$ and $S_{\phi^{-1}}$ are orientation-preserving diffeomorphic.  
\end{lemma}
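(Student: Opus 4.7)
The plan is to construct the desired diffeomorphism directly by interchanging the two copies of $N$ in the semi-bundle decomposition. Write $S_\phi = N_1 \cup_\phi N_2$ with $\phi\colon \partial N_2 \to \partial N_1$, and similarly $S_{\phi^{-1}} = N_1' \cup_{\phi^{-1}} N_2'$, and fix orientation-preserving identifications $\alpha_i\colon N \to N_i$ and $\beta_i\colon N \to N_i'$. I would then define $\Phi\colon S_\phi \to S_{\phi^{-1}}$ piecewise by $\Phi\circ \alpha_1 = \beta_2$ and $\Phi\circ\alpha_2 = \beta_1$; informally, $\Phi$ sends the ``first'' copy of $N$ in $S_\phi$ to the ``second'' copy of $N$ in $S_{\phi^{-1}}$ and vice versa.

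The main check is that $\Phi$ descends to a well-defined map on the quotient. A pair of identified points in $S_\phi$ has the form $\alpha_2(p) \sim \alpha_1(\phi(p))$ for $p \in \partial N$; their images under $\Phi$ are $\beta_1(p)$ and $\beta_2(\phi(p))$, and in $S_{\phi^{-1}}$ these are identified precisely because $\phi^{-1}(\phi(p)) = p$. Since $\Phi$ restricts to a diffeomorphism $N \to N$ on each piece, it is a diffeomorphism of the quotient.

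For orientations, both $S_\phi$ and $S_{\phi^{-1}}$ carry the orientation induced from a fixed orientation on $N$ via the identifications $\alpha_i$ and $\beta_i$; orientability of the total space forces both $\phi$ and $\phi^{-1}$ to reverse the induced orientation on the boundary torus, which is consistent with $\det A_\phi = -1 = \det A_\phi^{-1}$. Since $\Phi$ is orientation-preserving on each piece, it is orientation-preserving globally.

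I do not anticipate any serious obstacle here: the statement essentially just encodes the symmetry of the torus semi-bundle construction under relabeling which copy of $N$ is glued to which. The only mildly delicate point is to verify that orienting all four copies of $N$ in the same way is consistent with the orientations of $S_\phi$ and $S_{\phi^{-1}}$ used elsewhere in the paper, but this follows because the orientation of a torus semi-bundle is determined by the orientation of $N$ together with the requirement that the gluing be orientation-reversing on $\partial N$.
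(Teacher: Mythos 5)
Your proposal is correct and follows essentially the same route as the paper: both arguments simply swap the roles of the two copies of $N$, observing that this turns the gluing map $\phi$ into $\phi^{-1}$ while preserving the orientation induced from $N$. Your version just spells out the well-definedness and orientation checks that the paper leaves implicit.
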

\begin{proof}By definition, $S_\phi=N_1\cup_\phi N_2$ with the convention that $\phi:\partial N_2\to \partial N_1$. Switching the roles of the two copies of $N$ we get $N_2\cup_{\phi^{-1}} N_1$, with the convention that the domain of the gluing map is part of the rightmost manifold. The latter is clearly $S_{\phi^{-1}}$ showing that $S_\phi=S_{\phi^{-1}}$.
\end{proof}

The following lemma states gives the first explicit relationship between Sol and dihedral manifolds that we need.

\begin{lemma}\label{lem:sol-splice}
The Sol manifold $S_\phi$ can be described as a generalized splice of $D_{-b/c}$ and $D_{a/c}$ along the singular fibers of order $b$ and $a$ as in \Cref{fig:plumbing}.
\end{lemma}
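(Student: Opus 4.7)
The plan is to verify the splice description directly by reinterpreting both sides of the claim through the Seifert fibration of $N$ with base $D^{2}(2,2)$. On the boundary $\partial N$, there are two natural bases: the meridian-longitude pair $\{\mu,\lambda\}$ of the knot $J$ in $\R P^{3}\#\overline{\R P^{3}}$ used throughout the paper, and a section-fiber pair $\{\sigma,h\}$ coming from the Seifert fibration over $D^{2}(2,2)$. The first step is to translate between these two bases: by reading the surgery description in \Cref{fig:I-bundle} (together with the Seifert data $(2,1),(2,-1)$), I would write down the change-of-basis matrix $M\in GL_{2}(\Z)$ with $\begin{pmatrix}\sigma\\ h\end{pmatrix}=M\begin{pmatrix}\mu\\ \lambda\end{pmatrix}$.

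Next, I would realize each dihedral manifold as a filling of $N$. Attaching a solid torus $V_{b}$ to $\partial N$ so that its meridian maps to $b\sigma-ch$ extends the Seifert fibration of $N$ to one with base $S^{2}$ and singular fibers $(2,1),(2,-1),(b,-c)$, hence produces $D_{-b/c}$. Likewise, attaching $V_{a}$ along $a\sigma-ch$ (with appropriate sign/orientation conventions coming from \Cref{rmk:c>0}) produces $D_{a/c}$. In particular,
\[
D_{-b/c}\setminus \nu(F_{b})\;\cong\;N_{1},\qquad D_{a/c}\setminus \nu(F_{a})\;\cong\;N_{2},
\]
where $F_{b},F_{a}$ are the singular fibers of orders $b,a$ respectively, and the boundaries are parameterized by the two section-fiber bases. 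The meridian of $F_{b}$ on $\partial N_{1}$ is the image of the $V_{b}$-meridian, i.e.\ $b\sigma-ch$; the cabling curve that plays the role of the ``longitude'' is the regular Seifert fiber $h$ together with a dual curve making a basis.

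Now I would assemble the generalized splice. Removing $V_{a}$ and $V_{b}$ and gluing the two copies of $N$ by swapping the meridian of each excised singular fiber with the regular fiber of the other piece (this is the meaning of \emph{generalized splice} along singular fibers; the precise convention is dictated by \Cref{fig:plumbing}) gives a gluing map $\psi:\partial N_{2}\to\partial N_{1}$ whose matrix in the bases $\{\sigma_{2},h_{2}\}$ and $\{\sigma_{1},h_{1}\}$ is the composition of the splice swap with the filling data $(b,-c)$ and $(a,-c)$. Conjugating by the change-of-basis $M$ from the first step then rewrites this matrix in the $\{\mu,\lambda\}$ basis. The final step is to check that the resulting $2\times 2$ integer matrix of determinant $-1$ is exactly
\[
A_{\phi}\;=\;\begin{pmatrix}a & c\\ d & b\end{pmatrix},
\]
with $d$ forced by the determinant condition.

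The routine content is the Seifert-fibration extension and the identification of knot complements in dihedral manifolds with $N$. The main obstacle is bookkeeping: pinning down the change-of-basis $M$ and the signs in the splice matrix so that all orientations match (in particular, keeping track of the orientation-reversing $\phi$ versus the orientation-preserving filling maps), and confirming that the freedom to replace $c$ by $-c$ noted in \Cref{rmk:c>0} is consistent with the splice convention. Once these sign choices are aligned, the identification with $A_{\phi}$ drops out from a single matrix multiplication.
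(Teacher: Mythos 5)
Your overall strategy---fill $N$ along the order-$b$ and order-$a$ exceptional fibers to get the two dihedral manifolds, identify the fiber exteriors with $N_1,N_2$, and match the induced gluing against $A_\phi$---is viable, and for $c=1$ it is essentially the ``succinct description'' the paper records after the proof; the paper's actual proof instead runs through Kirby calculus, expanding $-b/c$ into a continued-fraction chain and composing the slam-dunk matrices $f_1,f_2$ to check $f_1^{-1}\circ\phi\circ f_2=\left(\begin{smallmatrix}0&1\\1&0\end{smallmatrix}\right)$. However, your prescription for the gluing map contains a genuine error, not just unfinished bookkeeping: you splice by sending the meridian of each excised exceptional fiber to the \emph{regular Seifert fiber} $h$ of the other piece. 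The meridian $\mu_{F_b}$ of an order-$b$ exceptional fiber meets the regular fiber $|b|$ times (that is what ``order $b$'' means), and $\mu_{F_a}\cdot h_2=\pm a$; since any homeomorphism $\psi\colon\partial N_2\to\partial N_1$ preserves intersection numbers up to sign, the conditions $\psi(h_2)=\mu_{F_b}$ and $\psi(\mu_{F_a})=h_1$ would force $|a|=|b|$. Concretely, with $c=1$, $h=\mu$ and $\mu_{F_b}=\mu_1+b\lambda_1$, these two conditions give $\psi(\lambda_2)=\tfrac{b}{a}\lambda_1$, which is not an integral class. So the map you intend to write down does not exist for general $a,b$, and the concluding ``single matrix multiplication'' cannot return $A_\phi$.

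The correct generalized splice pairs the meridian with the \emph{rational longitude} of the exceptional fiber, i.e.\ the unique slope on $\partial N$ that is torsion in $H_1(N)=\Z\oplus\Z/2$. That slope is $\lambda_J$, the fiber of the \emph{other} Seifert fibration of $N$ (over the M\"obius band); it is not the fiber $h$ of the $D^2(2,2)$ fibration you are extending, since $h=\mu_J$ maps to $-2y$ in \eqref{H1(N)} and has infinite order. With $\mu_{F_b}=c\mu_1+b\lambda_1$ and $\mu_{F_a}=c\mu_2-a\lambda_2$, the conditions $\psi(\lambda_2)=\mu_{F_b}$ and $\psi(\mu_{F_a})=\lambda_1$ yield $c\,\psi(\mu_2)=ac\mu_1+(ab+1)\lambda_1=ac\mu_1+cd\lambda_1$, hence $\psi(\mu_2)=a\mu_1+d\lambda_1$ and $\psi=A_\phi$, as required. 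Two further points you should not defer as routine: (i) meridian and rational longitude meet $|c|$ times, which is exactly why this is only a \emph{generalized} splice, and for $|c|>1$ the statement ``as in \Cref{fig:plumbing}'' still requires producing the chain $B,x_1,\dots,x_n,A$ via continued fractions, which your sketch never does; (ii) your step of computing the change of basis between $\{\sigma,h\}$ and $\{\mu,\lambda\}$---in particular that the filling slope is $c\mu+b\lambda$ rather than, say, $-b\mu+c\lambda$---is where the real content of the lemma lies, since an error by the rotation $\left(\begin{smallmatrix}0&-1\\1&0\end{smallmatrix}\right)$ is not absorbed by the mapping class group of $N$ and produces a different Sol manifold.
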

\begin{proof}
Notice that the function $\phi$ identifies the curves $\lambda_1$ with $c\mu_2-a\lambda_2$, and $c\mu_1+b\lambda_1$ with $\lambda_2$. Thus, to get the desired splice it is enough to find curves $\gamma_1$ in $D_{-b/c}$ and $\gamma_2$ in $D_{a/c}$ with exterior $N_i$ such that $\mu_{\gamma_1}=c\mu_1+b\lambda_1$, $\mu_{\gamma_2}=c\mu_2-a\lambda_2$, and the rational longitudes $\lambda_{\gamma_i}$ equal $\lambda_i$.    

In the surgery description for $D_{-b/c}$ corresponding to the structure $ S^2\left (0;(2,1),(2,-1) ,(b,-c)\right )$ as in \Cref{fig:dihedral}, replace the curve with framing $-b/c$ by a chain of linked unknots corresponding to the continued fraction expansion $-b/c=[B;x_1,\ldots, x_n]^-$, so that the component with framing $B$ is a meridian of $J_1$. Let $\gamma_1$ be a meridian for the last circle in the chain. The slam-dunks give an identification of the torus $\partial N(\gamma_1)$ with $\partial N(J_1)$ via the map

\begin{equation}
f_1=\begin{bmatrix}0&-1\\1&0\end{bmatrix}\begin{bmatrix}B&-1\\1&0\end{bmatrix}\begin{bmatrix}x_{1}&-1\\1&0\end{bmatrix}\cdots \begin{bmatrix}x_{n}&-1\\1&0\end{bmatrix}=\begin{bmatrix}0&-1\\1&0\end{bmatrix}\begin{bmatrix}-b&c\\c'&b'\end{bmatrix}.
\end{equation}

Next, in the identification $D_{a/c}=S^2\left (0;(2,1),(2,-1) ,(a,c)\right )$, replace the framing $a/c$ by an integer $A$ defined by the equality $a/c=[A;x_n,\ldots, x_1]^-$, or equivalently, by the equation $A=ac'-b'd$. Let $\gamma_2$ be a meridian for that curve. In this case the isomorphism $f_2:\partial N(\gamma_2)\to\partial N(J_2)$ is given by 

\begin{equation}
f_2=\begin{bmatrix}0&-1\\1&0\end{bmatrix}\begin{bmatrix}A&-1\\1&0\end{bmatrix}=\begin{bmatrix}-1&0\\A&-1\end{bmatrix}
\end{equation}

This realizes the respective complements of $\gamma_1,\gamma_2$ in $D_{-b/c},D_{a/c}$ as the result of surgery on a solid torus. Since the composition $f_1^{-1}\circ\phi\circ f_2$ equals $\left(\begin{smallmatrix}0 & 1 \\1 & 0\end{smallmatrix}\right)$, the manifold $S_\phi$ is the actual splice of $D_{-b/c}\setminus N(\gamma_1)$, and $D_{a/c}\setminus N(\gamma_2)$. From this we obtain a description of $S_\phi$ as in \Cref{fig:plumbing}.

\end{proof}

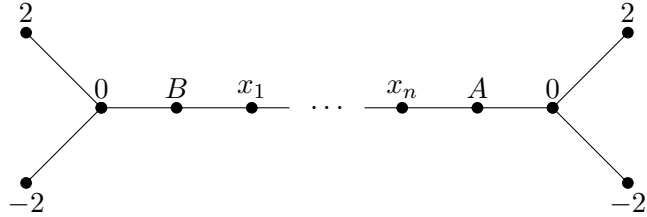
\begin{figure}[h]
\centering
\begin{tikzpicture}
  \filldraw (0,1) circle (2pt) node [align=center, above] {$2$} --(1,0) circle (2pt)--(0,-1) circle (2pt) node [align=center, below] {$-2$};
   \filldraw(8,1) circle (2pt) node [align=center, above] {$2$}--(7,0) circle (2pt)--(8,-1) circle (2pt) node [align=center, below] {$-2$};
  \filldraw (1,0) circle (2pt)node [align=center, above] {$0$}--(2,0) circle (2pt)node [align=center, above] {$B$}-- (3,0) circle (2pt)node [align=center, above] {$x_1$}--(3.5,0);
  \filldraw (4.5,0)--(5,0) circle (2pt) node [align=center, above] {$x_n$}-- (6,0) circle (2pt)node [align=center, above] {$A$}--(7,0) circle (2pt) node [align=center, above] {$0$};
  \draw (4,0) node {$\ldots$};
  \end{tikzpicture}
\caption{A general Sol manifold as a splice of dihedral manifolds.}\label{fig:plumbing}
\end{figure}

The lemma above considers the general case of dihedral manifolds. In \Cref{spinc_sol} and \Cref{main_proof} we will focus on the case $c=1$, and this case has the following more succinct description: if $\gamma_1$ is a meridian for the singular fiber of order $-b$ in $D_{-b}$, then there is a homeomorphism between $D_{-b}\setminus N(\gamma_1)$ and $N_1=\R P^3\# \overline{\R P^3}\setminus N(J_1)$ that identifies the rational longitudes, and that identifies a meridian of $\gamma_1$ with the $b$-framing of $J$. Analogously, taking $\gamma_2$ to be the meridian of the singular fiber of order $a$ in $D_a$ gives an identification $(D_a \setminus N(\gamma_2),\lambda_{\gamma_2},\mu_{\gamma_2})$ with $(N_2, \lambda_{J_2},\mu_{J_2}-a\lambda_{J_2})$. So, the manifold $S_\phi$ corresponds precisely to the gluing of $D_{-b}\setminus N(\gamma_1)$ and $D_{a}\setminus N(\gamma_2)$ along their boundaries determined by the identification $\lambda_{\gamma_2}\to \mu_{\gamma_1}$, $\mu_{\gamma_2}\to \lambda_{\gamma_1}$.  Since $\lambda_{\gamma_i}$ and $\mu_{\gamma_i}$ are distance 1 in this case, this is an honest splice, with diagram given in \Cref{fig:plumbingc=1}.

\begin{figure}[h]
\centering
\def\svgwidth{0.5\textwidth}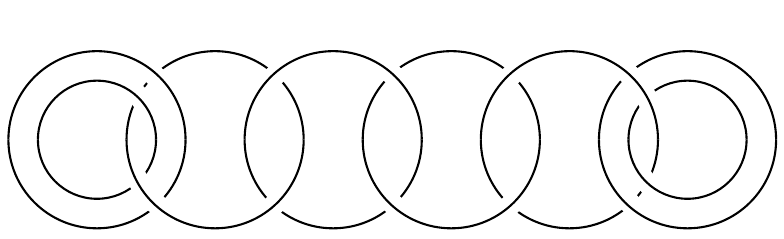
\caption{A Sol manifold as a splice of dihedral manifolds when $c = 1$.}\label{fig:plumbingc=1}
\end{figure}

\subsection{Rational homology cobordisms between Sol and dihedral manifolds}\label{sec:rational-sol-dihedral}

\begin{figure}
\centering
\def\svgwidth{0.125\textwidth}\input{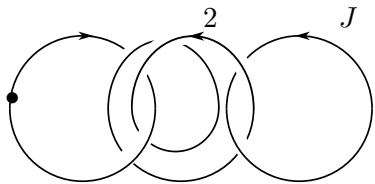}
\caption{The result of attaching a 1-handle and a 2-handle to the cylinder over the unknot complement $S^3\setminus N(J)$.}\label{fig:rel_cob}
\end{figure}

\begin{proposition}\label{prop:cobs} There exist rational homology cobordisms $W_{a/c}, W_{-b/c}$ from $D_{a/c},D_{-b/c}$ (respectively) to $S_\phi$. The homology groups admit isomorphisms 
$$
H_1\left(W_{a/c}\right)\cong \Z/2\oplus \Z/4c \quad\quad H_1\left(W_{-b/c}\right)\cong\begin{cases}
\Z/2\oplus \Z/4c & \text{if } b\equiv 1 \pmod{2}\\
\Z/4\oplus \Z/2c & \text{else.}
\end{cases}
$$
Moreover, the maps on $H_1$ induced by the inclusions $\partial W\to W$ can be computed explicitly as in \Cref{maps_H1}.
\end{proposition}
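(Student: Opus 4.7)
The plan is to build each cobordism by a local ``handle-swap'' inside the corresponding dihedral manifold, using the fact recorded in \Cref{fig:rel_cob} that $N$, the twisted $I$-bundle over the Klein bottle, is obtained from a solid torus by a 4-dimensional cobordism consisting of one 1-handle and one 2-handle of framing $2$.  Concretely, inside $D_{a/c}$ I pick a tubular neighborhood $V\cong S^1\times D^2$ of the singular fiber of order $a$, begin with the product cobordism $D_{a/c}\times I$, and attach a 1-handle and a 2-handle to $D_{a/c}\times\{1\}$ whose attaching data lies in $V\times\{1\}$ and matches \Cref{fig:rel_cob} (with the knot $J$ placed as a meridian of the singular fiber).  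The resulting 4-manifold $W_{a/c}$ has incoming boundary $D_{a/c}$, and its outgoing boundary is obtained from $D_{a/c}$ by excising $V$ and regluing $N$; by \Cref{lem:sol-splice} and the identifications spelled out after it, this outgoing boundary is precisely $S_\phi$.  The cobordism $W_{-b/c}$ is constructed symmetrically inside $D_{-b/c}$, using the singular fiber of order $b$.

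Next, I verify that $W_{a/c}$ and $W_{-b/c}$ are rational homology cobordisms.  Each is obtained from a product cobordism by attaching one 1-handle and one 2-handle, so it is homotopy equivalent to the corresponding dihedral manifold with a 1-cell and a 2-cell attached, and in particular $\chi(W)=0$.  To show that the pair $(W,D_{a/c}\times I)$ is rationally acyclic it suffices to show that the relative boundary map $C_2\to C_1$ in cellular chains is $\pm 1$; this is visible directly from \Cref{fig:rel_cob}, since the attaching circle of the 2-handle algebraically traverses the dotted 1-handle exactly once.  Rational acyclicity of the pair $(W,S_\phi)$ then follows from Lefschetz duality.

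To extract the explicit $H_1$ computation and the boundary-inclusion maps, I apply Mayer--Vietoris to the decomposition $W_{a/c}=(D_{a/c}\times I)\cup X$, where $X$ is the 4-manifold realized by the diagram in \Cref{fig:rel_cob}, glued in along $V\times I$.  Taking $H_1(D_{a/c})$ from \Cref{prop:H1(dih)}, reading $H_1(X)$ off its handle presentation, and using that the overlap has the homology of a solid torus, I obtain an integer presentation matrix for $H_1(W_{a/c})$ whose Smith normal form yields $\Z/2\oplus\Z/4c$.  The same recipe applied to $W_{-b/c}$ splits into two cases according to the parity of $b$ that exactly mirror the case split of \Cref{prop:H1(dih)}, producing $\Z/2\oplus\Z/4c$ when $b$ is odd and $\Z/4\oplus\Z/2c$ when $b$ is even.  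The same Mayer--Vietoris sequence, tracked at the level of chosen generators, furnishes the matrices for the maps $H_1(\partial W)\to H_1(W)$ recorded in \Cref{maps_H1}.

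The main obstacle is the last step of torsion bookkeeping: tracking specific generators carefully enough to produce the explicit presentations of the inclusion-induced maps on $H_1$, as opposed to only the abstract isomorphism types of $H_1(W)$.  These maps, and not merely the abstract groups, are what feed into the $\mathrm{Spin}^c$-structure analysis and $d$-invariant computations in the rest of the paper, so the presentations chosen here must be compatible with the bases used later.
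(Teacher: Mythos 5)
Your construction is essentially the paper's: the paper forms a relative cobordism $X$ from the solid torus $S^3\setminus N(J)$ to $N$ by attaching the 1- and 2-handle of \Cref{fig:rel_cob} to $[0,1]\times(S^3\setminus N(J))$, and then glues $X$ to the product over the complementary copy of $N$, which is exactly your ``product over the dihedral manifold with handles attached over the solid torus''; the groups and maps are likewise extracted via Mayer--Vietoris and Smith normal forms. However, there is a concrete error in your acyclicity step. The attaching circle of the $2$-framed $2$-handle in \Cref{fig:rel_cob} runs over the dotted $1$-handle algebraically \emph{twice}, not once: the dotted circle arises from sliding the $-2$-framed component of \Cref{fig:I-bundle} over the $2$-framed one, which leaves the resulting $0$-framed (then dotted) circle linking the $2$-framed circle with linking number $\pm 2$. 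This is what produces the paper's presentation $H_1(X)=\langle \mu,\lambda,z\mid \lambda=0,\ 2z+\mu=0\rangle$, i.e.\ the inclusion of the horizontal boundary sends $\mu\mapsto -2z$. Hence the relative boundary map $C_2\to C_1$ for the pair $(W,D_{a/c}\times I)$ is $\pm 2$, and $H_1(W,D_{a/c};\Z)\cong\Z/2$, not $0$.

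This is not a cosmetic slip: if the map were $\pm1$ the pair would be integrally acyclic and $H_1(W_{a/c})\cong H_1(D_{a/c})$, contradicting the statement you are proving (for $a$ odd and $c=1$, $H_1(D_a)\cong\Z/4$ while $H_1(W_a)\cong\Z/2\oplus\Z/4$; the extra $\Z/2$ is precisely $H_1(W,D)$). Rational acyclicity survives, since any nonzero integer suffices, but your final step --- ``reading $H_1(X)$ off its handle presentation'' and tracking generators into the presentation matrices for $H_1(W)$ and for \Cref{maps_H1} --- would produce the wrong groups and maps unless the relation $2z+\mu=0$ is used. Relatedly, that last step is only sketched: the content of the ``moreover'' clause is exactly the change-of-basis bookkeeping (the unimodular matrices realizing the Smith normal forms) that the paper carries out explicitly, and which your proposal defers.
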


\begin{remark}
\Cref{prop:cobs} gives the appearance of a discrepancy between $H_1(W_{a/c})$ and $H_1(W_{-b/c})$.  This is not the case, since when $b \equiv 0 \pmod{2}$, $c$ is odd, and so $\Z/4 \oplus \Z/2c \cong \Z/2 \oplus \Z/4c$.  We have chosen to express the groups in this way as it makes the maps in \Cref{maps_H1} easier to work with.   
\end{remark}

\begin{proof} Let $J\subset S^3$ be an unknot, and construct a 4-manifold $X$ as  \begin{equation}\label{eq:rel_cob}X=[0,1]\times \left(S^3\setminus N(J)\right)\bigcup \left( 1-\text{handle}\cup 2-\text{handle}\right),\end{equation} with attaching information as in \Cref{fig:rel_cob}. This 4-manifold is a relative cobordism from the solid torus $S^3\setminus N(J)=D^2\times S^1$ to $N$, the twisted $I$-bundle over the Klein bottle. From the construction it is clear that the so-called incoming `vertical' boundary component of $X$ is $S^3\setminus N(J)$. To see that the outgoing boundary component of $X$ is $N$, notice that sliding the $-2$-framed handle over the $2$-framed handle in \Cref{fig:I-bundle} produces a Kirby diagram like the one in \Cref{fig:rel_cob}, modulo replacing the dotted unknot representing the 1-handle with a $0$-framed 2-handle. Notice also that $\partial_h X$ the `horizontal' boundary component of $X$ is diffeomorphic to $[0,1]\times T^2$, and the identification with each $T^2$ is always given by the choice of meridian-longitude pair of the knot $J$, even when considered as a knot in $\R P^3\# \overline{\R P^3}$. In terms of homology, notice that if $z$ represents the homology class determined by the oriented meridian of the dotted circle, then 
\begin{equation}\label{H1(X)}
H_1(X)=\langle \mu,\lambda, z \mid \lambda=0,\; 2z+\mu=0\rangle=\langle z\rangle= \Z
\end{equation}

Let $X_1,X_2$ be two copies of the 4-manifold $X$, and construct 
\begin{equation}\label{eq:cob}
W_{-b/c}= \left([0,1]\times N_1\right)\cup_\Phi X_2, \quad\text{and}\quad W_{a/c}=X_1\cup_\Phi \left([0,1]\times N_2\right),\end{equation} 
where by $\Phi$ we mean the self-diffeomorphism $\phi$ for each fixed value of $t\in[0,1]$. The fact that $X_i$ is a relative cobordism from a solid torus to $N_i$ implies that $W_{a/c}$, $W_{-b/c}$ are, respectively, cobordisms from $D_{a/c}$ and $D_{-b/c}$, to $S_\phi$.  

To get the maps on homology, it will be beneficial to express each manifold $M$ involved (be it 3- or 4-manifolds) as a decomposition $M=M_1\cup_{F} M_2$ where $F$ is separating. This is because in this case the Mayer-Vietoris sequence will have the general form $$H_1(F)\to H_1(M_1)\oplus H_1(M_2)\to H_1(M),$$ thus realizing $H_1(M)$ as the cokernel of a map of the form $x\to (g_1(x),g_2(x))$. Choosing $F=\partial M_2$ allows us to take $g_2$ to be the map on homology induced by the inclusion $\iota: \partial M_2\to M_2$, and $g_1$ induced by the composition $\iota\circ\phi:\partial M_2\to M_1$. When $H_1(F)$ is torsion free (which is the present case), if $P_i$ denotes a matrix presentation for $H_1(M_i)$ and $G_i$ one for $g_i$, the group $H_1(M)$ has matrix presentation $$\left[\begin{array}{c|c|c}P_1& 0 & G_1 \\\hline 0& P_2 & G_2\end{array}\right].$$ 
The specific decompositions that we consider are 
\arraycolsep=0.5ex\def\arraystretch{0.75}
{\small
\begin{align*}
& D_{-b/c}=N_1\underset{\phi}{\cup} D^2\times S^1, &&D_{a/c}=D^2\times S^1 \underset{\phi}{\cup} N_2, &&W_{-b/c}=\left([0,1]\times N_1\right)\underset{\Phi}{\cup} X_2, &&W_{a/c}=X_1\underset{\Phi}{\cup} \left([0,1]\times N_2\right),
\intertext{and these induce the following matrix presentations for the first homology groups:}
&\left[\begin{array}{rr|rr}
0 & 0 & -2 a & -2 c \\
2 & 0 & d & b \\\hline
0 & 0 & 1 & 0 \\
0 & 1 & 0 & 1
\end{array}\right],&
&\left[\begin{array}{rr|rr}
0 & 0 & a & c \\
1 & 0 & d & b \\\hline
0 & 0 & -2 & 0 \\
0 & 2 & 0 & 1
\end{array}\right],&
&
\left[\begin{array}{rr|rr}
0 & 0 & -2 a & -2 c \\
2 & 0 & d & b \\\hline
0 & 0 & -2 & 0 \\
0 & 1 & 0 & 0
\end{array}\right],&
&
\left[\begin{array}{rr|rr}
0 & 0 & -2 a & -2 c \\
1 & 0 & d & b \\\hline
0 & 0 & -2 & 0 \\
0 & 2 & 0 & 1
\end{array}\right].
\end{align*} }Here we are using the fact that the maps on $H_1$ induced by the inclusions $\partial N\hookrightarrow N$, $\partial D^2\times S^1\hookrightarrow D^2\times S^1$, $\partial_h X\simeq \partial D^2\times S^1\hookrightarrow X$ have, respectively, matrix presentation $\left(\begin{smallmatrix}-2 & 0 \\\phantom{-}0 & 1\end{smallmatrix}\right)$, $\left(\begin{smallmatrix}1 & 0 \\0 & 1\end{smallmatrix}\right)$, and $\left(\begin{smallmatrix}-2 & 0 \\\phantom{-}0 & 0\end{smallmatrix}\right)$ with respect to the bases $\{\mu,\lambda\}$ for $H_1(\partial N)=H_1(\partial D^2\times S^1)$, $\{\mu,\lambda,y\}$ for $H_1(N)$, and $\{\mu,\lambda,z\}$ for $H_1(X)$. Similar to the proof of \Cref{prop:H1(sol)}, the Smith normal from of the matrices depends on the parities of the parameters. Straightforward computations give the claimed structure for $H_1(W_{-b/c})$ and $H_1(W_{a/c})$. \\

To get the maps induced by the inclusions of the 3-manifolds into the cobordisms $W_{-b/c},W_{a/c}$, we focus on the inclusions of the constituent pieces of the 3-manifolds into those of the cobordisms. Namely, we focus on the maps $H_1(N)\to H_1(X)$ and $H_1(D^2\times S^1)\to H_1(X)$ (the others are given by the identity). With respect to our chosen generators, these maps are given respectively by, $y\to z, \lambda\to 0$, and $\mu\to -2z, \lambda\to 0$. Since our identifications between the relevant first homology groups and finite abelian groups rely on the computation of Smith normal forms, we need to take those into consideration in our computations of the maps induced by inclusions. Our strategy is straight forward and is as follows: let $P$ be a presentation matrix, and $F,C$ unimodular matrices such that $D=FPC$, where $D$ is the Smith normal form of $P$. Let $q$ be the quotient map $\Z^4\to \text{coker}(D)$. If $h:\Z^4\to \Z^4$ is such that $h(\Ima(P))\subseteq \Ima(P')$, then the induced map $\overline{h}: \text{coker}(D)\to \text{coker}(D')$ is given by $q'\circ F'\circ h\circ F^{-1} \circ q^T$. For example, if the parameters are such that $a,c,d$ are odd and $b$ is even, then $H_1(S_\phi)$ has Smith normal form
$$
\arraycolsep=0.5ex\def\arraystretch{0.75}
\small
D=\left(\begin{array}{rrrr}
1 & 0 & 0 & 0 \\
0 & 1 & 0 & 0 \\
0 & 0 & 4 & 0 \\
0 & 0 & 0 & 4 c
\end{array}\right)=\left(\begin{array}{rrrr}
0 & 0 & 0 & 1 \\
0 & 1 & 0 & 0 \\
0 & 2 & 1 & 0 \\
1 & 0 & -a & 2 c
\end{array}\right)
\left[\begin{array}{rrrr}
0 & 0 & -2 a & -2 c \\
2 & 0 & d & b \\
0 & 0 & -2 & 0 \\
0 & 2 & 0 & 1
\end{array}\right] 
\left(\begin{array}{ccrr}
-\tfrac{b}{2} & \tfrac{1-d}{2} & d & b \\
0 & 0 & 0 & 1 \\
0 & 1 & -2 & 0 \\
1 & 0 & 0 & -2
\end{array}\right)=FPC,$$
and the cobordism $W_{-b/c}$ has first homology with Smith normal form given by 
$$
\arraycolsep=0.5ex\def\arraystretch{0.75}
\small
D'=\left(\begin{array}{rrrr}
1 & 0 & 0 & 0 \\
0 & 1 & 0 & 0 \\
0 & 0 & 4 & 0 \\
0 & 0 & 0 & -2 c
\end{array}\right)=
\left(\begin{array}{rrrr}
0 & 0 & 0 & 1 \\
0 & 1 & 0 & 0 \\
0 & 2 & 1 & 0 \\
1 & 0 & -a & 0
\end{array}\right)
\left[\begin{array}{rrrr}
0 & 0 & -2 a & -2 c \\
2 & 0 & d & b \\
0 & 0 & -2 & 0 \\
0 & 1 & 0 & 0
\end{array}\right]
\left(\begin{array}{rrrr}
0 & -\frac{1}{2} d + \frac{1}{2} & d & -\frac{1}{2} b \\
1 & 0 & 0 & 0 \\
0 & 1 & -2 & 0 \\
0 & 0 & 0 & 1
\end{array}\right)=F'P'C'.$$
With respect to the meridian-longitude generators, the map on $H_1$ induced by the inclusion $S_\phi\to W_{-b/c}$ has matrix representation a diagonal matrix $\Delta$ with diagonal $(1,1,1,0)$. Thus, the matrix form of $H_1(S_\phi)\to H_1(W_{-b/c})$ with respect to the bases that give a diagonal presentation for $H_1(S_\phi), H_1(W_{-b/c})$ is the lower $2\times 2$ diagonal block of the matrix $$F'\circ \Delta\circ F^{-1}=\left(\begin{array}{rrrr}
0 & 0 & 0 & 0 \\
0 & 1 & 0 & 0 \\
0 & 0 & 1 & 0 \\
-2 c & 0 & 0 & 1
\end{array}\right).$$
 The remaining maps on first homology induced by inclusion can be computed directly as in \Cref{maps_H1}.  
\begin{table}

\resizebox{0.75\textwidth}{!} {
\begin{tabular}{|>{$}c<{$}|>{$}c<{$}|>{$}c<{$}|>{$}c<{$}|>{$}c<{$}|>{$}c<{$}|>{$}c<{$}|>{$}c<{$}|}\hline
&&&&&&&\\
a&b&c&d&S_\phi
\to W_{-b/c}
&D_{-b/c}
\to W_{-b/c}
&S_\phi
\to W_{a/c}
&D_{a/c}
\to W_{a/c}
\\[1ex]\hline
0&0&1&1& \left(\begin{array}{rr}
1 & 0 \\
0 & 1
\end{array}\right) 
& \left(\begin{array}{rr}
2 & 0 \\
0 & 1
\end{array}\right)
& \left(\begin{array}{rr}
1 & 0 \\
0 & 1
\end{array}\right)
& \left(\begin{array}{rr}
1 & 0 \\
0 & -2
\end{array}\right)\\
0&1&1&1& \left(\begin{array}{rr}
1 & 0 \\
0 & 1
\end{array}\right) 
& \left(\begin{array}{r}
0 \\
1
\end{array}\right) 
& \left(\begin{array}{rr}
1 & 0 \\
c & 1
\end{array}\right)
& \left(\begin{array}{rr}
1 & 0 \\
0 & -2
\end{array}\right)\\
1&0&1&1& \left(\begin{array}{rr}
1 & 0 \\
0 & 1
\end{array}\right)
& \left(\begin{array}{rr}
2 & 0 \\
0 & 1
\end{array}\right) 
& \left(\begin{array}{rr}
1 & 0 \\
0 & 1
\end{array}\right) 
& \left(\begin{array}{r}
1 \\
-1
\end{array}\right)\\
1&1&1&0& \left(\begin{array}{rrr}
0 & 1 & 0 \\
0 & 0 & 1
\end{array}\right) 
& \left(\begin{array}{r}
0 \\
1
\end{array}\right) 
& \left(\begin{array}{rrr}
0 & 1 & 0 \\
2 c & 0 & 1
\end{array}\right)
& \left(\begin{array}{r}
1 \\
-1
\end{array}\right)\\
1&1&0&1& \left(\begin{array}{rr}
1 & 0 \\
0 & 1
\end{array}\right)
&\left(\begin{array}{r}
0 \\
1
\end{array}\right)
& \left(\begin{array}{rr}
1 & 0 \\
c & 1
\end{array}\right) 
& \left(\begin{array}{r}
1 \\
-1
\end{array}\right)
\\
1&1&0&0& \left(\begin{array}{rrr}
0 & 1 & 0 \\
0 & 0 & 1
\end{array}\right) 
& \left(\begin{array}{r}
0 \\
1
\end{array}\right)
& \left(\begin{array}{rrr}
0 & 1 & 0 \\
2 c & 0 & 1
\end{array}\right)
& \left(\begin{array}{r}
1 \\
-1
\end{array}\right)
\\\hline
\end{tabular}}
\medskip
\caption{Maps on first homology induced by the inclusions of dihedral/Sol manifolds into the rational cobordisms relating them.}\label{maps_H1}
\end{table}

\end{proof}

The above arguments in fact show: 
\begin{corollary}\label{sol-ball} 
Every Sol rational homology sphere with $|H_1| = 16$ bounds a rational homology ball.  More generally, $S_\phi$ is rationally homology cobordant to an order $|c|$ lens space.  
\end{corollary}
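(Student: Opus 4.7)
The plan is to reduce the first statement to the second: by \Cref{prop:H1(sol)}, $|H_1(S_\phi)|=16|c|$, so the hypothesis $|H_1(S_\phi)|=16$ forces $|c|=1$. A rational homology cobordism from $S_\phi$ to an order-$1$ lens space (that is, $S^3$) can then be capped off with a $4$-ball to produce a rational homology ball bounding $S_\phi$. So it suffices to prove the second statement.

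For the cobordism, I will recycle the relative cobordism $X$ from the proof of \Cref{prop:cobs}, which interpolates between a solid torus $ST$ and the twisted $I$-bundle $N$. Taking two copies $X_1,X_2$ and gluing them along their side boundaries $[0,1]\times T^2$ via $\Phi$ (the identity on $[0,1]$ and $\phi$ on $T^2$), but with no intervening $[0,1]\times N_i$ factor this time, I form $V=X_1\cup_\Phi X_2$. By construction the outgoing boundary is $N_1\cup_\phi N_2=S_\phi$, while the incoming boundary is the gluing $ST_1\cup_\phi ST_2$ of the two bottom solid tori along $\phi$. Using the convention that $\lambda_{J_i}$ bounds a meridian disk in each $ST_i$, the relations in $H_1(ST_1\cup_\phi ST_2)$ reduce to $\lambda_1=0$ and $c\mu_1+b\lambda_1=\phi(\lambda_2)=0$, leaving only $c\mu_1=0$. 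So the incoming boundary is a lens space of order $|c|$, which is nonzero since $S_\phi$ is a rational homology sphere.

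What remains is to verify that $V$ is a rational homology cobordism, which I expect to follow from exactly the same type of Mayer-Vietoris calculation used in \Cref{prop:cobs}. Applying Mayer-Vietoris to $V=X_1\cup X_2$, with intersection homotopy equivalent to $T^2$, expresses $H_1(V)$ as the cokernel of a map $\Z^2\to H_1(X_1)\oplus H_1(X_2)=\Z^2$ whose entries come from the formulas $\mu\mapsto -2z$, $\lambda\mapsto 0$ for the inclusion of the horizontal boundary into $X$, together with the matrix for $\phi$ (and its inverse) on the tori. The resulting presentation matrix will have determinant $\pm 4c\neq 0$, so $H_1(V)$ is finite torsion; an analogous calculation shows $H_2(V;\Q)=0$ and $H_3(V;\Q)=\Q$, matching the rational homology of either boundary component. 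I do not expect a genuine obstacle here; the principal care required is to track the meridian-longitude conventions for $A_\phi$ consistently, to confirm that the surviving relation in the incoming boundary is $c\mu_1=0$ (yielding order $|c|$) rather than one involving $d$ or another entry of $A_\phi$.
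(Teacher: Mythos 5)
Your construction $V=X_1\cup_\Phi X_2$ is exactly the paper's proof, which simply states that $X_1\cup_\Phi X_2$ is a rational homology cobordism from an order $|c|$ lens space to $S_\phi$; your identification of the incoming boundary as $ST_1\cup_\phi ST_2$ with $H_1$ of order $|c|$, and your Mayer--Vietoris sketch (determinant $\pm 4c$), correctly fill in the details the paper leaves implicit. No issues.
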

\begin{proof}
Following the notation of the above proof, $X_1 \cup_\Phi X_2$ is a rational homology cobordism from an order $|c|$ lens space to $S_\phi$.  
\end{proof}

\section{Spin$^c$ structures}\label{sec:spinc}
In this section, we provide some background on spin$^c$ structures that will help us compare the values of the $d$-invariants of potentially homology cobordant Sol manifolds. Recall that all compact, smooth, orientable three- and four-manifolds admit spin$^c$ structures.  For $M$ a smooth manifold admitting a spin$^c$ structure, a choice of $\mathfrak{s}_0\in Spin^c(M)$ determines a (non-canonical) bijection 
\begin{equation}\label{spinc:bijection}
H^2(M)\to Spin^c(M)
\end{equation} given by $\alpha\to \mathfrak{s}_0+\alpha$. Alternatively, the difference of two elements of $\mfs,\mfs' \in Spin^c(M)$ is a well-defined element $\mfs - \mfs' \in H^2(M)$. Furthermore, there is a canonical function \begin{equation} c_1: Spin^c(M)\to H^2(M)\end{equation} (which may be neither injective nor surjective) such that:
\begin{itemize}
\item $c_1(\mathfrak{s}+\alpha)=c_1(\mathfrak{s})+2\alpha$.
\item Conjugation induces an involution $Spin^c(M)\to Spin^c(M)$ that reverses signs of Chern classes and the $H^2(M; \Z)$-action. That is, $\overline{\mfs + \alpha} = \overline{\mfs} - \alpha$ and 
$c_1(\overline{\mathfrak{s}})=-c_1(\mathfrak{s})$.  
\item The set of $Spin^c$ structures with first Chern class 0 
is precisely the set of self-conjugate elements. 
\item On a closed, oriented, three-manifold, $\text{Im}\left(c_1\right)$ is exactly the set $2H^2(M)=\{2\alpha \mid \alpha \in H^2(M)\}$.  
\end{itemize}
For more details about spin$^c$ structures, we refer the reader to \cite[Proposition 1]{gompf-spinc}.  

Spin$^c$ structures also behave well with respect to cobordisms.  If $\mfs$ is a spin$^c$ structure on a cobordism $W: M_1 \to M_2$, then there are restrictions $\mfs|_{M_i}$.  Furthermore, the restrictions respect the affine $H^2$ structure: 
\begin{equation}
(\mfs + \alpha)|_{M_i} = \mfs|_{M_i} + \alpha|_{M_i}, \ \alpha \in H^2(W).
\end{equation}
 
Usually, given a cobordism $W: M_1 \to M_2$, to understand the restriction maps on spin$^c$ structures, it is actually easier to compute the induced maps on $H_1$ and then convert to $H^2$.  We now describe this more carefully.  Let $W: M_1 \to M_2$ be a rational homology cobordism between rational homology three-spheres.  Let $\iota_i$ be the maps on $H_1$ induced by the inclusions of $M_i$ into $W$.  Then $H^2(W)$ is given by $Ext(H_1(W), \mathbb{Z})$ and similarly for the $M_i$; the restriction of $\alpha$ to $M_i$ is then given by $\iota_i^*(\alpha)$, where $\iota_i$ is the induced map on $Ext$.  (Note that there is no Poincar\'e duality needed here.)   For example if $\iota _i: \Z/2 \to \Z/4$ is multiplication by 2, then the induced map on $Ext$ from $\Z/4$ to $\Z/2$ is the quotient map; conversely, if the map from $\Z/4$ to $\Z/2$ is the quotient map, then the map on $Ext$ is multiplication by 2.  \Cref{maps_H1} shows that the maps on $H_1$ that we consider are always given component-wise by one of these maps, the identity map, or zero. The corresponding homomorphisms on $H^2$ are included in \Cref{diagrams}. (By \Cref{woop}, we can ignore the case $a \equiv 1, b \equiv 0 \pmod{2}$.)  Finally, to get the induced maps on spin$^c$ structures it is enough to pick a base spin$^c$ structure on $W$, say $\mfs$, and consider its restriction $\mfs_i$ to $M_i$ to get 
\begin{equation}
(\mfs + \alpha)|_{M_i} = \mfs_i + \iota_i^*(\alpha). 
\end{equation}

\subsection{$d$-invariants}
For a rational homology three-sphere, Heegaard Floer homology assigns a function $d: Spin^c(M) \to \mathbb{Q}$, called the {\em d-invariants} or {\em correction terms}.  (See \cite{OS} for the definition and properties.)  The $d$-invariants have two useful properties which we will exploit.  If $\mfs$ and $\mfsbar$ are conjugate spin$^c$ structures on $M$, then $d(M, \mfs) = d(M, \mfsbar)$.  Furthermore, if $W: M_1 \to M_2$ is a rational homology cobordism, then for any spin$^c$ structure $\mfs$ on $W$, we have $d(M_1, \mfs|_{M_1}) = d(M_2, \mfs|_{M_2})$. \\ 

An important component of our work is the connection between Heegaard Floer homology and the Casson-Walker-Lescop invariants.  If $Y$ is an L-space, by \cite[Theorem 3.3]{rustamov} we have 
\begin{equation}\label{d=casson}
\sum_{\mathfrak{t} \in Spin^c(Y)} d(Y,\mathfrak{t}) = -2\lambda_L(Y), 
\end{equation}
where $\lambda_L$ denotes the Lescop invariant, normalized so that if $\Sigma(2,3,5)$ is the boundary of the negative definite $E_8$ manifold, then $\lambda_L\left(\Sigma(2,3,5)\right)=-1$. Recall that for $Y$ a rational homology sphere, $\lambda_L(Y) = |H_1(Y)| \lambda_W(Y)$ where $\lambda_W$ is the Casson-Walker invariant. When $Y$ is a dihedral manifold of the form $D_n$ for $n\in\Z$, Doig computes in \cite[Example 15]{doig} the four Heegaard Floer $d$-invariants of $D_n$ as\footnote{Our sign conventions are chosen so as to agree with those of \cite[Corollary 1.5]{OS-plumbed} and of Lescop's invariant.}: 
\begin{align}\label{eq:d-dih}
d(D_n) &= \left\{0, 0, \tfrac{n+2}{4}, \tfrac{n-2}{4}\right\}
\end{align}
From \Cref{eq:d-dih} and \Cref{d=casson} we get a simple formula for the Lescop invariant for order 4 dihedral manifolds as follows: 
\begin{equation}\label{eq:dihed}
\lambda_L(D_n) = -\frac{1}{2}\sum_{\mathfrak{t} \in Spin^c(D_n)} d(D_n,\mathfrak{t}) =-\frac{n}{4}.
\end{equation}

\begin{remark}\label{dih-self} Recall that for any $\mathfrak{u}\in Spin^c(D_n)$ we have the equality $d(D_n,\mathfrak{u})=d(D_n,\overline{\mathfrak{u}})$. This shows that if $n$ is an odd integer, and $\mathfrak{u}\in Spin^c(D_n)$, then $ \mathfrak{u}\neq \overline{\mathfrak{u}}$ is equivalent to $d(D_n,\mathfrak{u})=0.$ 
\end{remark}

\section{Spin$^c$ structures on Sol manifolds}\label{spinc_sol}
This section presents a detailed analysis of the spin$^c$ structures on Sol manifolds and a computation of (most of) the $d$-invariants.  We will specialize to the case of $c = 1$ throughout, which as discussed in \Cref{rmk:c>0} covers all oriented diffeomorphism classes of Sol rational homology spheres with order 16 first homology.  We set the following notation: For $a, b \in \mathbb{Z}$,  $\Sol$ denotes the Sol rational homology sphere with gluing matrix $\phi = \begin{pmatrix} a & 1 \\ ab+1 & b \end{pmatrix}$.  
As discussed earlier, if $a$, $b$, or $ab+1$ is 0, then $\Sol$ is Seifert fibered instead of Sol.  For notational simplicity, we treat these as degenerate Sol manifolds.  Our arguments will involved a case analysis based on the parities of $a$ and $b$.  By \Cref{woop}, we can always avoid the case of $a \equiv 1, b \equiv 0 \pmod{2}$, so we will not need to mention this case in what follows. 

We write $\wb: H^2(W_{-b}) \to H^2(\Sol)$, $\wa: H^2(W_a) \to H^2(\Sol)$, $\db: H^2(W_{-b}) \to H^2(D_{-b})$, and $\da: H^2(W_a) \to H^2(D_a)$ for the associated restriction maps.  We also let $\solextend$ denote the spin$^c$ structures on $\Sol$ that extend over both $W_{-b}$ and $W_a$, $\solextendb$ denote the spin$^c$ structures on $\Sol$ that extend over $W_{-b}$ and not $W_a$, and similarly for $\solextenda$.  Finally, $\solnoextend$ denotes the spin$^c$ structures on $\Sol$ that neither extend over $W_{-b}$ nor $W_a$.  In what follows, we will rely heavily on the second cohomology computations shown in \Cref{diagrams}.  We remark that the identifications between the second cohomology groups and the specific abelian groups in \Cref{prop:cobs} are non-canonical.  We will also use the properties of spin$^c$ structures discussed in \Cref{sec:spinc} without explicit reference.

\begin{lemma}\label{lem:extend} For the Sol manifold $\Sol$ and rational homology cobordisms $W_{-b},W_{a}$ from \Cref{prop:cobs}, let $\solextend$ be the set of elements of $Spin^c(\Sol)$ that extend to both $W_{-b}, W_a$. Then $\solextend$ is non-empty, and, moreover, for fixed $\theta \in \solextend$:   
\begin{enumerate}
\item the associated bijection $H^2(\Sol)\to Spin^c(\Sol)$ can be partitioned into four bijections of 4-element sets: 
\begin{align*}
\Ima(\wb) \cap \Ima(\wa)  &\to \solextend, \\
\Ima(\wb) \setminus \Ima(\wa)  &\to \solextendb, \\  
\Ima(\wa) \setminus \Ima(\wb)  &\to \solextenda, \\
H^2(\Sol) \setminus \left(\Ima(\wb) \cup \Ima(\wa)\right) &\to \solnoextend.
\end{align*}
\item The values of $c_1$ on $\solextend, \solextenda, \solextendb, \solnoextend$ are as given in \Cref{chern-classes} following the identifications in \Cref{prop:cobs}.
\end{enumerate}
\end{lemma}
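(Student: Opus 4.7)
The proof naturally separates into three stages: (i) showing $\solextend \neq \emptyset$, (ii) establishing the claimed bijective partition of $Spin^c(\Sol)$, and (iii) computing the first Chern classes. For (i), the plan is to exploit that every smooth orientable 4-manifold carries a spin$^c$ structure, so both restriction maps $Spin^c(W_{-b}) \to Spin^c(\Sol)$ and $Spin^c(W_a) \to Spin^c(\Sol)$ have non-empty image; because each is affine over the corresponding map on $H^2$, the images are cosets of $\Ima(\wb)$ and $\Ima(\wa)$ respectively. These cosets intersect if and only if the difference of any pair of chosen base restrictions lies in $\Ima(\wb) + \Ima(\wa)$. From the cohomology diagrams of \Cref{diagrams}, one reads off in each surviving parity case for $(a, b) \pmod{2}$ (the case $a$ odd, $b$ even being eliminated by \Cref{woop}) that $|\Ima(\wb)| = |\Ima(\wa)| = 8$ while $|\Ima(\wb) \cap \Ima(\wa)| = 4$; since $|H^2(\Sol)| = 16$, inclusion-exclusion forces $\Ima(\wb) + \Ima(\wa) = H^2(\Sol)$, so the cosets meet and $\solextend$ is non-empty.

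For (ii), fix $\theta \in \solextend$, yielding the bijection $\alpha \mapsto \theta + \alpha$ from $H^2(\Sol)$ to $Spin^c(\Sol)$. Since $\theta$ is in the image of both restriction maps, the image of $Spin^c(W_{-b}) \to Spin^c(\Sol)$ equals $\theta + \Ima(\wb)$, and analogously for $W_a$. Hence $\theta + \alpha$ extends over $W_{-b}$ if and only if $\alpha \in \Ima(\wb)$, and likewise for $W_a$. The four subsets of $H^2(\Sol)$ in the statement are precisely the preimages of $\solextend$, $\solextendb$, $\solextenda$, and $\solnoextend$, and each has cardinality $4$ by the same inclusion-exclusion count.

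For (iii), the identity $c_1(\theta + \alpha) = c_1(\theta) + 2\alpha$ reduces the computation to identifying $c_1(\theta)$ and then understanding the doubling map $\alpha \mapsto 2\alpha$ on each of the four subsets of $H^2(\Sol)$. A convenient choice of $\theta$ is one extending a spin$^c$ structure on $W_{-b}$ (or $W_a$) with a specified Chern class; since $c_1$ commutes with restriction one automatically has $c_1(\theta) \in \Ima(\wb) \cap \Ima(\wa)$, and a normalizing choice makes $c_1(\theta)$ either $0$ or a distinguished 2-torsion element. Applying this to the three surviving parity cases for $(a, b) \pmod{2}$ and using the explicit generators of \Cref{prop:cobs} then produces the entries of \Cref{chern-classes}. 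The bulk of the work, and the main obstacle, is bookkeeping: the identifications of $H^2(\Sol)$, $H^2(W_{-b})$, and $H^2(W_a)$ with concrete abelian groups are non-canonical, so consistent bases must be fixed at the outset and carried through the doubling map and the torsion decompositions in each case; once this is arranged, the table follows by routine arithmetic.
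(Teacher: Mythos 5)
Your parts (i) and (ii) are essentially correct. For non-emptiness you take a slightly different route from the paper: the paper simply forms the closed $4$-manifold $Z = W_{-b}\cup_{\mathrm{id}}\overline{W_a}$ and restricts a spin$^c$ structure on $Z$ to $\Sol$, whereas you argue that the two images of the restriction maps are cosets of $\Ima(\wb)$ and $\Ima(\wa)$ and that $\Ima(\wb)+\Ima(\wa)=H^2(\Sol)$ by an order count from \Cref{diagrams}. Both work; the paper's is shorter and avoids checking the subgroup orders case by case. Your argument for the partition into four $4$-element sets matches the paper's.

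Part (iii), however, has a genuine gap. You correctly reduce to computing $c_1(\theta)$ for one $\theta\in\solextend$ (via $c_1(\theta+\alpha)=c_1(\theta)+2\alpha$ and the fact that doubling is constant on each of the four subsets), but you then assert that ``a normalizing choice makes $c_1(\theta)$ either $0$ or a distinguished 2-torsion element'' and that the table follows by ``routine arithmetic.'' There is no such freedom: all elements of $\solextend$ have the same $c_1$, so whether $c_1(\solextend)=0$ or not is a basis-independent fact to be \emph{determined}, not normalized. Purely cohomological constraints do not suffice. For example, when $a\equiv 0,\ b\equiv 1\pmod 2$, one has $\Ima(\wb)\cap\Ima(\wa)=2H^2(\Sol)=\{(0,0),(2,0),(0,2),(2,2)\}$, so the constraint $c_1(\theta)\in\Ima(\wb)\cap\Ima(\wa)\cap 2H^2(\Sol)$ rules out nothing; the four candidate values are exactly the four entries of the row of \Cref{chern-classes}, and you must decide which one belongs to $\solextend$. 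The paper resolves this with Heegaard Floer input: cobordism invariance of $d$ gives $d(\Sol,\theta)=d(D_a,\mathfrak{u}_a)=d(D_{-b},\mathfrak{u}_b)$, and since $a$ and $b$ have different parities these values must be $0$ by \Cref{eq:d-dih}, which by \Cref{dih-self} forces $c_1(\mathfrak{u}_b)\neq 0$ and hence $c_1(\theta)=(2,2)$. Similarly, in the case $a,b\equiv 1\pmod 2$ the paper compares $\theta$ with $\theta+(0,1,0)$, using that $\db(1,0)=0$ while $\da(1,0)=2$, to force $c_1(\theta)=(0,0,2)\neq 0$. These non-vanishing statements are essential later (they identify which spin$^c$ structures are self-conjugate in the proof of \Cref{thm:main}), so this step cannot be dismissed as bookkeeping; your proposal as written would not produce the correct table.
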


\begin{proof} To see that $\solextend$ is non-empty, consider the smooth 4-manifold $Z=W_{-b}\cup_{id}\overline{W_a}$. Then $Z$ admits a spin$^c$ structure $\mfs$, and so $\theta=\mfs|_{\Sol}$ is an element of $\solextend$.  We fix this $\theta$ for the remainder of the proof.  

Let $\theta' \in Spin^c(\Sol)$ extend to $\mathfrak{s_b}\in Spin^c(W_{-b})$.  Then $\theta' + \alpha$ extends over $W_{-b}$ if and only if $\alpha \in \Ima(\wb)$.  Indeed, if $\alpha = \wb(\beta)$, then $\theta' + \alpha$ is the restriction of $\mathfrak{s_b} + \beta \in Spin^c(W_{-b})$ to $\Sol$. Conversely, if $\theta' + \alpha$ admits an extension $\mathfrak{t_b}\in Spin^c(W_{-b})$, then $\alpha = (\theta' + \alpha) - \theta'=\wb(\mathfrak{s_b} - \mathfrak{t_b})$. The analogous statements hold for extensions over $W_a$, and this gives the claimed bijections. The fact that each set has exactly four elements can be deduced directly from the maps in \Cref{diagrams}.  This establishes the first item.  

For the second item, we begin by showing that the first Chern class of the spin$^c$ structures are constant on the sets $\solextend, \solextenda$, etc.  We begin with $\solextend$.  First, suppose that $\theta' \in \solextend$.  Applying the homomorphism $c_1$ to the equality $\theta'=\theta+(\theta'-\theta)$ gives $$c_1(\theta')=c_1(\theta)+2(\theta'-\theta),$$ 
where by the discussion above, $\theta'-\theta \in \Ima(\wb) \cap \Ima(\wa)$. Direct computation using \Cref{diagrams} shows that $2\cdot\left(\Ima(\wb) \cap \Ima(\wa)\right)=0$, and so $c_1(\theta')=c_1(\theta)$.  The other cases work similarly, except the values of $c_1$ may not agree with $c_1(\theta)$.  The key observation is that each of $$2\left(\Ima(\wb) \setminus \Ima(\wa)\right),\: 2\left(\Ima(\wa) \setminus \Ima(\wb))\right),2\left(H^2(\Sol) \setminus \Ima(\wa) \cup \Ima(\wb))\right),$$ always consists of a single element, and this element is precisely the difference of the value of $c_1(\theta)$ and the first Chern class for the elements of $\solextendb$, $\solextenda$ or $\solnoextend$ respectively.  In conclusion, we only need to compute $c_1$ for a single element of any of the relevant sets of spin$^c$ structures.   

Now, we compute the first Chern class of the spin$^c$ structures by a case analysis on the parities of $a, b$.  First, suppose that $a, b \equiv 0 \pmod{2}$.  Note that in this case all spin$^c$ structures on $D_{-b}$ and $D_a$ have $c_1 = 0$.  From \Cref{diagrams}, we then see that if a spin$^c$ structure on $\Sol$ extends over $W_{-b}$, the second coordinate of $c_1$ must be 0.  Indeed, if $c_1(\theta') = (z,2)$ for some $z$ and $\theta'$ extends over $W_{-b}$, the first Chern class of the extension is of the form $(q,1)$ or $(q,3)$ in $H^2(W_{-b})$ and cannot restrict to be 0 on $D_{-b}$.  A similar argument shows that a spin$^c$ structure on $\Sol$ that extends over $W_a$ must have first coordinate of $c_1$ equal to 0.  Therefore, $c_1(\solextend) = \{(0,0)\}$ in $H^2(\Sol)$.  Next, we determine $c_1$ for $\solextendb$.  Note that $(1,0) \in \Ima(\wb) \setminus \Ima(\wa)$.  Therefore $\theta' = \theta + (1,0) \in \solextendb$.  We have $c_1(\theta') = (2,0)$.  This shows $c_1(\solextendb) = \{(2,0)\}$.  A similar argument shows $\theta + (0,1) \in \solextenda$ and so $c_1(\solextenda) = \{(0,2)\}$.  Finally, $\theta + (1,1) \in \solnoextend$ and $c_1(\solnoextend) = \{(2,2)\}$.  This completes the computation in the case $a, b \equiv 0 \pmod{2}$.  

Next, we consider $a \equiv 0, b \equiv 1 \pmod{2}$.  We begin by computing $c_1$ for $\theta \in \solextend$.  Let $\mfs_{-b}$ be an extension over $W_{-b}$ and $\mathfrak{u}_{-b}$ the restriction to $D_{-b}$; define $\mfs_a$ and $\mathfrak{u}_a$ similarly.  Then, $d(\Sol, \theta) = d(D_a, \mathfrak{u}_a) = d(D_{-b}, \mathfrak{u}_b)$.  Since the parities of $a, b$ are different, these $d$-invariants must be zero by \Cref{eq:d-dih}.  This means that $c_1(\mathfrak{u}_b) \neq 0$ by \Cref{dih-self}.  It follows from \Cref{diagrams} that $c_1(\theta)  = (z,2)$ for some $z$.   Because $c_1(\theta)$ is also an element of $\Ima(\wa)$, we must have $c_1(\mfs_a) = (0,2)$ or $(1,2)$; however, the latter implies $c_1(\mathfrak{u}_a) \neq 0$, which is not possible on $D_a$.  Therefore, $c_1(\mfs_a) = (0,2)$ and $c_1(\theta) = (2,2)$.  Note that $(0,1)$ (respectively $(1,1)$) is in $\Ima(\wb) \setminus \Ima(\wa)$ (respectively $\Ima(\wa) \setminus \Ima(\wb)$).  Therefore, $\theta + (0,1)$ (respectively $\theta + (1,1)$) is in $\solextendb$ (respectively $\solextenda$), and so we get 
$$c_1(\solextend) = \{(2,2)\}, \ c_1(\solextendb) = \{(2,0)\}, \ c_1(\solextenda) = \{(0,0)\}, \ c_1(\solnoextend) = \{(0,2)\}.$$        

Finally, we handle the case $a, b \equiv 1 \pmod{2}$.  There are only two possible values of $c_1$ in this case: $(0,0,0)$ and $(0,0,2)$.  Further, notice from \Cref{diagrams} that any spin$^c$ structure $\theta'$ on $\Sol$ has $c_1 = 0$ if and only if the restriction to $D_{-b}$ of an extension over $W_{-b}$ has $c_1 = 0$.  We begin with $\solextend$.  Let $x = (0,1,0) \in H^2(\Sol)$, which is in $\Ima(\wb) \cap \Ima(\wa)$, and so $\theta + x \in \solextend$ as well.  Since $x = \wb(1,0)$ and $\db(1,0) = 0$, we see that the unique extensions of $\theta$ and $\theta + x$ over $W_{-b}$ restrict to the same spin$^c$ structure on $D_{-b}$.  Therefore, $d(\Sol, \theta) = d(\Sol, \theta + x)$.  On the other hand, $x = \wa(1,0)$ and $\da(1,0) = 2$.  Therefore, the extensions of $\theta$ and $\theta + x$ over $W_a$ restrict to distinct spin$^c$ structures on $D_a$, which then necessarily have the same $d$-invariant.  Since $b$ is odd, this forces those two spin$^c$ structures to be the ones with $d = 0$ by \Cref{eq:d-dih}, i.e. the ones with $c_1 \neq 0$.  Therefore, $c_1(\theta) \neq 0$ and thus $c_1(\theta) = (0,0,2)$.  For $\solextendb$ (respectively $\solextenda$), we have $(0,0,1)$ (respectively $(1,0,1)$) is in $\Ima(\wb) \setminus \Ima(\wa)$ (respectively $\Ima(\wa) \setminus \Ima(\wb)$, and so 
$$c_1(\solextend) = \{(0,0,2)\}, \ c_1(\solextendb) = c_1(\solextenda) = \{(0,0,0)\}, \ c_1(\solnoextend) = \{(0,0,2)\}.$$        
This completes the claimed values in \Cref{chern-classes}.  
\end{proof}

\begin{table}
\begin{tabular}{|>{$}c<{$}|>{$}c<{$}|>{$}c<{$}|>{$}c<{$}|>{$}c<{$}|>{$}c<{$}|}\hline
a&b&\solextend&\solextendb&\solextenda&\solnoextend\\\hline
0&0&(0,0)&(2,0)&(0,2)&(2,2)\\\hline
0&1&(2,2)&(2,0)&(0,0)&(0,2)\\\hline
1&1&(0,0,2) &(0,0,0) & (0,0,0) &(0,0,2)\\\hline
\end{tabular}
\caption{The values of $c_1$ on the various spin$^c$ structures on $\Sol$.}\label{chern-classes}
\end{table}

Now that we have identified the spin$^c$ structures on $\Sol$, we can compute most of the $d$-invariants.  
\begin{proposition}\label{d-comp}
Let $a, b \in \Z$ and consider the Sol manifold $\Sol$.  Then, 
\begin{enumerate}
\item the $d$-invariants of $\solextend$ are $\{0, 0, 0, 0\}$;  
\item the $d$-invariants of $\solextendb$ are $\{\tfrac{-b+2}{4}, \tfrac{-b+2}{4},\tfrac{-b-2}{4},\tfrac{-b-2}{4} \}$; 
\item the $d$-invariants of $\solextenda$ are $\{\tfrac{a+2}{4}, \tfrac{a+2}{4},\tfrac{a-2}{4},\tfrac{a-2}{4} \}$.
\end{enumerate}
\end{proposition}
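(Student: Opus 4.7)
The plan is to leverage the rational homology cobordisms $W_{-b}: D_{-b} \to \Sol$ and $W_a: D_a \to \Sol$ from \Cref{prop:cobs} to transport $d$-invariant computations from $\Sol$ to the dihedral manifolds, where Doig's formula \Cref{eq:d-dih} gives the explicit values $\{0, 0, \tfrac{n+2}{4}, \tfrac{n-2}{4}\}$ for $d(D_n)$. The main input is that $d$-invariants are preserved under spin$^c$-extendable rational homology cobordisms.

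First I would set up the lift-and-restrict framework. By \Cref{prop:cobs}, $|H^2(W_{-b})| = 8$ when $c = 1$, which matches $|\solextend \cup \solextendb| = 8$ from \Cref{lem:extend}. Hence the restriction map $Spin^c(W_{-b}) \to Spin^c(\Sol)$, whose image is $\solextend \cup \solextendb$, is a bijection onto its image: every $\theta \in \solextend \cup \solextendb$ admits a unique lift $\mfs_b$ over $W_{-b}$. Setting $\mathfrak{u}_b := \db(\mfs_b) \in Spin^c(D_{-b})$, one obtains $d(\Sol, \theta) = d(D_{-b}, \mathfrak{u}_b)$. Using the induced $H^2$ maps from \Cref{maps_H1} (via $Ext$, as tabulated in \Cref{diagrams}), I would verify that the further restriction $Spin^c(W_{-b}) \to Spin^c(D_{-b})$ is surjective, hence $2$-to-$1$ since $|Spin^c(D_{-b})| = 4$. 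Analogous statements hold through $W_a$ for $\solextend \cup \solextenda$.

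For part (1), fix $\theta \in \solextend$, so that $d(\Sol, \theta) = d(D_{-b}, \mathfrak{u}_b) = d(D_a, \mathfrak{u}_a)$. I would show this common value equals $0$ by case analysis on the parities of $a, b$. When $b$ is odd, \Cref{chern-classes} gives $c_1(\mathfrak{u}_b) \neq 0$, so $\mathfrak{u}_b \neq \overline{\mathfrak{u}_b}$, and \Cref{dih-self} yields $d(D_{-b}, \mathfrak{u}_b) = 0$. The case $a$ odd is handled symmetrically via $W_a$ and $D_a$. The remaining case $a, b$ both even is the main obstacle: every spin$^c$ structure on $D_n$ for $n$ even has $c_1 = 0$, so the first Chern class alone cannot distinguish vanishing from non-vanishing $d$-invariants on the dihedral manifolds. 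To handle this, I would identify the four lifts of $\solextend$ to $Spin^c(W_{-b})$ as a single coset under the $H^2$-action by $\wb^{-1}(\Ima(\wb) \cap \Ima(\wa))$, using the coset description from \Cref{lem:extend}, and then apply the explicit $H^2$ maps in \Cref{diagrams} to show that $\db$ carries this coset onto the two spin$^c$ structures on $D_{-b}$ with vanishing $d$-invariant, each hit with multiplicity two.

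Once (1) is established, parts (2) and (3) follow by a counting argument. The restriction $\solextend \cup \solextendb \to Spin^c(D_{-b})$ is $2$-to-$1$ and surjective, so it has exactly $2$ preimages over each of the $4$ spin$^c$ structures on $D_{-b}$. Part (1) implies $\solextend$ accounts for all $4 = 2 + 2$ preimages lying over the two $d$-zero spin$^c$ structures of $D_{-b}$, so the four elements of $\solextendb$ must account for the preimages over the remaining two spin$^c$ structures, whose $d$-invariants by \Cref{eq:d-dih} are $\tfrac{-b+2}{4}$ and $\tfrac{-b-2}{4}$, each achieved with multiplicity two. This yields the multi-set claimed in (2), and the argument for (3) is the symmetric version via $W_a$ and $D_a$.
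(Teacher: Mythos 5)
Your setup (unique lifts over $W_{-b}$, the $2$-to-$1$ surjection onto $Spin^c(D_{-b})$) and your derivation of parts (2) and (3) from part (1) by multiset counting match the paper's argument, and your treatment of part (1) when $b$ (or $a$) is odd --- using $c_1(\mathfrak{u}_b)\neq 0$ from the proof of \Cref{lem:extend} together with \Cref{dih-self} --- is a valid shortcut. The problem is part (1) in the case $a\equiv b\equiv 0\pmod 2$, which you correctly identify as the main obstacle but do not actually resolve. You propose to show, using only the $H^2$ maps of \Cref{diagrams}, that $\db$ carries the coset of lifts of $\solextend$ onto ``the two spin$^c$ structures on $D_{-b}$ with vanishing $d$-invariant.'' The cohomological bookkeeping does show that this coset maps two-to-one onto a two-element subset of $Spin^c(D_{-b})$, but nothing in that bookkeeping can identify which two of the four spin$^c$ structures on $D_{-b}$ have $d=0$: when $b$ is even every spin$^c$ structure on $D_{-b}$ is self-conjugate with $c_1=0$, \Cref{eq:d-dih} only provides the unlabelled multiset $\{0,0,\tfrac{-b+2}{4},\tfrac{-b-2}{4}\}$, and the identification of $H^2(D_{-b})$ with $\Z/2\oplus\Z/2$ is non-canonical. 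So your argument leaves open the possibility that $\solextend$ restricts to a pair of spin$^c$ structures on $D_{-b}$ carrying the nonzero $d$-invariants, and parts (2) and (3) inherit this gap in the all-even case.

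The missing idea is the one the paper uses to close this case: choose $\alpha\in\Ima(\wb)\cap\Ima(\wa)$ with $\alpha\in\wb\left(\ker(\db)\right)$ but $\alpha\notin\wa\left(\ker(\da)\right)$ (for instance $\alpha=(2,0)$ when $a,b$ are even). Then $\theta$ and $\theta+\alpha$ both lie in $\solextend$; their extensions over $W_{-b}$ restrict to the \emph{same} spin$^c$ structure on $D_{-b}$, giving $d(\Sol,\theta)=d(\Sol,\theta+\alpha)$, while their extensions over $W_a$ restrict to \emph{distinct} spin$^c$ structures $\mathfrak{u}_a\neq\mathfrak{u}_a+x$ on $D_a$. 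Hence two distinct spin$^c$ structures on $D_a$ share a $d$-invariant, and by \Cref{eq:d-dih} the only way this happens is if the common value is $0$. Your comparison $d(D_{-b},\mathfrak{u}_b)=d(D_a,\mathfrak{u}_a)$ for a single $\theta$ is too weak, since a priori this common value could be any number appearing in both lists (e.g.\ when $a=-b$ the two lists coincide). Unless you supply the paper's two-structures-forced-equal argument, or an explicit identification of which spin$^c$ structures on an even dihedral manifold realize $d=0$, the even case of part (1) is not established.
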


\begin{proof}
By \Cref{lem:extend} we can fix $\theta \in \solextend$.  
For the first item, notice that there exists $\alpha\in \Ima(\wb) \cap \Ima(\wa)\subseteq H^2(\Sol)$ such that $\alpha\in \wb\left(\ker(\db)\right)$ but $\alpha\notin \wa\left(\ker(\da)\right)$. Indeed, using the maps from \Cref{diagrams} we see that if $a\equiv 0\pmod{2}$, then $\alpha=(2,0)$, and if $a,b\equiv 1\pmod{2}$, then $\alpha=(0,1,0)$. Taking $\theta'=\theta+\alpha$ we get, 
\begin{align*}
d(\Sol,\theta)&=d(D_{-b},\mathfrak{u}_b)=d(\Sol,\theta')\\
d(\Sol,\theta)&=d(D_{a},\mathfrak{u}_a)\\
d(\Sol,\theta')&=d(D_{a},\mathfrak{u}_a+x)
\end{align*}
where $\mathfrak{u}_a, \mathfrak{u}_b$ are restrictions to $D_a, D_{-b}$ of an extension of $\theta$, and $x\neq 0\in H^2(D_a)$ is defined to be such that $x\in\da\left(\wa^{-1}(\alpha)\right)$. This forces $d(D_{a},\mathfrak{u}_a)=d(D_{a},\mathfrak{u}_a+x)$, and by \Cref{eq:d-dih}, this happens only if this value is exactly $0$.  This argument applies for any $\theta \in \solextend$ and so $d$ is identically 0 on $\solextend$.\footnote{One could also identify $\Ima(\wb) \cap \Ima(\wa)$ with $\wb\left(\ker(\db)\right) + \wa\left(\ker(\da)\right)$ and get the result directly.}

Next, we study $\solextendb$.  Note from \Cref{diagrams} that any spin$^c$ structure on $\Sol$ which has an extension over $W_b$ has a unique such extension, and the restriction map from spin$^c$ structures on $W_b$ to $D_{-b}$ is a surjective, 2-to-1 map.  Let $\theta'\in\solextendb$ and denote by $\mathfrak{v}_b$ the element of $Spin^c(D_{-b})$ obtained as the restriction of an extension of $\theta'$ to $W_{-b}$. As before, $d(\Sol,\theta')=d(D_{-b},\mathfrak{v}_b)$ but the above discussion shows that there are four spin$^c$ structures in $\solextend$ with $d = 0$ which correspond to two spin$^c$ structures on $D_{-b}$ with $d = 0$. This means that $d(D_{-b},\mathfrak{v}_b)\in\{\tfrac{-b + 2}{4}, \tfrac{-b - 2}{4}\}$, and in fact there are two choices for $\mathfrak{v}_b\in Spin^c(D_{-b})$. Since each choice gives two corresponding spin$^c$ structures on $\solextendb$ with the corresponding $d$-invariant, this gives the claimed four $d$-invariants for $\solextendb$.  (Note that it may be the case that one pair of these $d$-invariants could still be 0, but that only happens if $b=\pm 2$.) 

The case of $\solextenda$ can subsequently be deduced similarly.  
\end{proof}


\begin{table}
\resizebox{0.95\textwidth}{!}{
\begin{tabular}{|>{$}c<{$}|>{$}c<{$}|>{$}c<{$}|>{$}c<{$}|>{$}c<{$}|>{$}c<{$}|>{$}c<{$}|>{$}c<{$}|>{$}c<{$}|>{$}c<{$}|>{$}c<{$}|>{$}c<{$}|}\hline
a&b
&c_1(\mathfrak{u}_b)
&c_1(\mathfrak{u}_a)
&c_1(\mathfrak{s}_b)\in \db^{-1}(c_1(\mathfrak{u}_b))
&c_1(\mathfrak{s}_a)\in \da^{-1}(c_1(\mathfrak{u}_a))
&\wb\left(\db^{-1}(c_1(\mathfrak{u}_b))\right)
&\wa\left(\da^{-1}(c_1(\mathfrak{u}_a))\right)
&c_1(\theta)
\\\hline
0&0
&0&0&\{(0,0),(2,0)\}&\{(0,0),(0,2)\}
&\{(0,0),(2,0)\}
&\{(0,0),(0,2)\}
&(0,0)
\\\hline
0&1
&2&0&\{(1,2),(0,2)\}&\{(0,0),(2,2)\}
&\{(2,2),(0,2)\}
&\{(0,0),(2,2)\}
&(2,2)
\\\hline
1&1
&2&2&\{(0,2),(1,2)\}&\{(1,0),(0,2)\}
&\{(0,0,2),(0,1,2)\}
&\{(0,1,0),(0,0,2)\}
&(0,0,2)
\\\hline
\end{tabular}
}\medskip
\caption{Computations of the first Chern classes of the spin$^c$ structures obtained as extensions/restrictions of any element $\theta\in\solextend$.}\label{tab:extending-theta}
\end{table}

\section{Integer Homology Classification of Sol manifolds.}\label{main_proof}
In this section, we prove \Cref{thm:main}.  
We begin with the following: 
\begin{proposition}\label{d-sum-sol}
For $a, b \in \mathbb{Z}$, we have $$\sum_{\mfs \in Spin^c(\Sol)} d(\Sol, \mfs) = 2(a-b). $$
Consequently, if $\Sol$ and $\Solprime$ are homology cobordant then $a-b = a'-b'$, and further, $a \equiv a', b \equiv b' \pmod{2}$ or $a \equiv b', a' \equiv b \pmod{2}$. 
\end{proposition}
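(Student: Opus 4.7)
The plan is to compute $\sum_{\mfs \in Spin^c(\Sol)} d(\Sol, \mfs)$ explicitly and then exploit that this sum is an integer homology cobordism invariant. Partition $Spin^c(\Sol) = \solextend \sqcup \solextendb \sqcup \solextenda \sqcup \solnoextend$ as in \Cref{lem:extend}. By \Cref{d-comp},
\[
\sum_{\solextend} d = 0, \qquad \sum_{\solextendb} d = 2\cdot\tfrac{-b+2}{4} + 2\cdot\tfrac{-b-2}{4} = -b, \qquad \sum_{\solextenda} d = a,
\]
so the twelve extending spin$^c$ structures already contribute $a-b$ to the total.

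The main step is showing $\sum_{\solnoextend} d = a-b$. First, $\solnoextend$ decomposes into two conjugate pairs: by \Cref{chern-classes} the common value of $c_1$ on $\solnoextend$ is nonzero $2$-torsion, so conjugation preserves $\solnoextend$ without fixed points, and since $d$ is conjugation invariant, $\sum_{\solnoextend} d = 2(d_1+d_2)$ for two rational values $d_1, d_2$. The natural way to pin down $d_1+d_2 = (a-b)/2$ is to compute the Casson--Walker--Lescop invariant $\lambda_L(\Sol)$ from the plumbing of \Cref{fig:plumbingc=1} (or the splice description of \Cref{lem:sol-splice}) and invoke \eqref{d=casson}, correcting for $HF_{\mathrm{red}}$ since Sol manifolds need not be L-spaces. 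An appealing alternative, suggested by the pattern whereby $\solextendb$ captures the nonzero $d$-invariants of $D_{-b}$ and $\solextenda$ those of $D_a$, would be to construct a rational homology cobordism from $\Sol$ to $D_{a-b}$ across which exactly $\solnoextend$ extends, directly yielding $\{d_1,d_2\}=\{\tfrac{(a-b)+2}{4},\tfrac{(a-b)-2}{4}\}$. This is the main obstacle: no cobordism from \Cref{prop:cobs} sees $\solnoextend$, so the argument requires genuinely new geometric input or a careful Lescop-invariant computation with the $HF_{\mathrm{red}}$ correction controlled.

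Granting $\sum d(\Sol) = 2(a-b)$, the consequences follow quickly. An integer homology cobordism $W\colon \Sol \to \Solprime$ induces a bijection $Spin^c(\Sol) \to Spin^c(\Solprime)$ preserving $d$-invariants, so $\sum d$ matches and $a-b = a'-b'$. Moreover $W$ yields $H_1(\Sol) \cong H_1(\Solprime)$, and by \Cref{prop:H1(sol)} this isomorphism class detects whether both of $a, b$ are odd (group $(\mathbb{Z}/2)^2 \oplus \mathbb{Z}/4$) versus not (group $(\mathbb{Z}/4)^2$). Combining ``both odd'' status with the parity of $a-b = a'-b'$ forces the dichotomy $a\equiv a',\; b\equiv b' \pmod 2$ or $a\equiv b',\; a'\equiv b \pmod 2$, the second alternative accounting for the symmetry $S_\phi \cong S_{\phi^{-1}}$ of \Cref{woop}, which exchanges $(a,b)$ with $(-b,-a)$.
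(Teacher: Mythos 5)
You have correctly reduced the problem to showing $\sum_{\solnoextend} d = a-b$, but this is exactly the step you leave open, so the proof as written has a genuine gap. The resolution is the Lescop-invariant route you mention as a possibility, and both of your worries about it are unfounded. First, there is no $HF_{\mathrm{red}}$ correction to control: Sol rational homology spheres \emph{are} L-spaces (Boyer--Gordon--Watson, cited as \cite{bgw} in the paper), so \eqref{d=casson} applies on the nose. Second, the computation of $\lambda_L(\Sol)$ requires no new geometric input: by \Cref{lem:sol-splice}, $\Sol$ is a splice of $D_a$ and $D_{-b}$, and the Casson--Walker invariant is additive under splicing, so $\lambda_W(\Sol) = \lambda_W(D_a) + \lambda_W(D_{-b})$. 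Converting via $\lambda_L(M) = |H_1(M)|\,\lambda_W(M)$ and \eqref{eq:dihed} gives $\lambda_L(\Sol) = 4\lambda_L(D_a) + 4\lambda_L(D_{-b}) = -a + b$, whence $\sum_{\mfs} d(\Sol,\mfs) = -2\lambda_L(\Sol) = 2(a-b)$. This is the paper's argument; note that it computes the full sum in one stroke, so the partition into $\solextend, \solextendb, \solextenda, \solnoextend$ and the appeal to \Cref{d-comp} are not needed here at all (though your bookkeeping is consistent with it and, run in reverse, recovers $\sum_{\solnoextend} d = a-b$ as a corollary). The speculative cobordism from $\Sol$ to $D_{a-b}$ is unnecessary.

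Your treatment of the ``consequently'' clause is correct and in fact more explicit than the paper's one-line justification: the sum forces $a-b=a'-b'$, the isomorphism type of $H_1$ from \Cref{prop:H1(sol)} detects whether both parameters are odd (since $d=ab+1$ is even exactly when $a$ and $b$ are both odd), and combining this with the parity of $a+b \equiv a'+b'$ yields the stated dichotomy.
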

\begin{proof}
By \Cref{lem:sol-splice} and the splice-additivity of the Casson-Walker invariant (see for example \cite[Theorem 4.11]{saveliev-yellow}), we have that 
\begin{align*}
\lambda_W(\Sol) =& \lambda_W(D_{a}) + \lambda_W(D_{-b})
\intertext{and so using the fact that $\lambda_L(M) = |H_1(M)| \lambda_W(M)$, }
\frac{1}{16}\lambda_L(\Sol) =& \frac{1}{4}\lambda_L(D_{a}) + \frac{1}{4}\lambda_L(D_{-b}).
\intertext{This together with \eqref{eq:dihed} implies the desired result:}
\lambda_L(\Sol) =& 4\lambda_L(D_{a}) + 4\lambda_L(D_{-b})=b-a.
\end{align*}  
The first claim is then a direct application of \Cref{d=casson}, since Sol rational homology spheres are L-spaces \cite{bgw}.  The latter claim follows since the homology groups and the set of $d$-invariants are invariant under (integer) homology cobordism.
\end{proof}

We are now ready to complete the proof of \Cref{thm:main}.

\begin{proof}[Proof of \Cref{thm:main}]
Suppose that $\Sol$ and $\Solprime$ are homology cobordant.  Therefore, by possibly applying \Cref{d-sum-sol} and \Cref{woop}, we can assume $a \equiv a', b \equiv b' \pmod{2}$.  We now proceed with a case analysis based on the parities of the parameters.  

The first case we consider is that $a, a' \equiv 0, \ b, b' \equiv 1 \pmod{2}$ or vice versa.  
By possibly applying \Cref{woop}, we can assume that $a, a' \equiv 0, \ b, b' \equiv 1 \pmod{2}$.  Since self-conjugacy of spin$^c$ structures is preserved under homology cobordism, it is enough to compare the set of $d$-invariants of the self-conjugate spin$^c$ structures. By \Cref{lem:extend} the self-conjugate spin$^c$ structures on $\Sol$ are $\solextenda$.  By \Cref{d-comp}, the $d$-invariants of the self-conjugate spin$^c$ structures for $\Sol$ are therefore $\left\{\tfrac{a- 2}{4}, \tfrac{a- 2}{4}, \tfrac{a+2}{4}, \tfrac{a+2}{4}\right\}$.  We have the analogous result for $\Solprime$.  
The two sets of $d$-invariants are hence equal if and only if $a = a'$ and this forces $\Sol=\Solprime$ by \Cref{d-sum-sol}.

The next case is that all parameters $a, b, a', b'$ are odd.
Just like in the previous case, it is enough to compare the set of $d$-invariants of the self-conjugate spin$^c$ structures. By \Cref{lem:extend}, the self-conjugate spin$^c$ structures on $\Sol$ come from $\solextenda \cup \solextendb$. By \Cref{d-comp}, the set of $d$-invariants of self-conjugate spin$^c$ structures on $\Sol$ is therefore: 
$$
\left\{\tfrac{-b- 2}{4},\tfrac{-b- 2}{4},\tfrac{-b+ 2}{4},\tfrac{-b+ 2}{4},\tfrac{a- 2}{4}, \tfrac{a- 2}{4}, \tfrac{a+2}{4}, \tfrac{a+2}{4}\right\}
$$
and similarly for $\Solprime$.    
Without loss of generality we assume $b\leq b' $ (and so $-a'\leq -a$ by \Cref{d-sum-sol}).

\begin{enumerate}[label=Case \Roman*:,align=left]
\item $b'+2=b+2$ this is equivalent to $b'=b$ and $a'=a$ so that $\Sol=\Solprime$.
\item $b'+2=-a+2$ this implies $b'=-a$ and $a'=-b$ so that $\Solprime=M_{-b,-a}=\Sol$ by \Cref{woop}.
\item $b'+2=-a-2$ implies $b'+2-a'=-a-2-a'$, and since $a'-b'=a-b$, this gives $-a'-2=b+2$. Since $b'-2<b'+2=-a-2<-a+2$, the value $b'-2$ has to be equal to $b-2$ which immediately shows $\Sol=\Solprime$. 
\item $b'+2$ cannot be equal to $b-2$ since by assumption $b\leq b'$. 
\end{enumerate}
 
Finally, we have the case that all parameters $a, b, a', b'$ are even, which requires a more subtle argument.  
In this case, the self-conjugate spin$^c$ structures on $\Sol$ correspond to $\solextend$ by \Cref{lem:extend} and hence are 0 by \Cref{d-comp}.  Similarly for $\Solprime$, so we need to compare the non self-conjugate spin$^c$ structures.  From \Cref{lem:extend} and \Cref{d-comp}, we can describe the $d$-invariants on $\Sol$ by 
$$
\left\{\boxed{\tfrac{-b- 2}{4},\tfrac{-b- 2}{4},\tfrac{-b+ 2}{4},\tfrac{-b+ 2}{4}},\boxed{\tfrac{a- 2}{4}, \tfrac{a- 2}{4}, \tfrac{a+2}{4}, \tfrac{a+2}{4}}, \boxed{q_1,q_2,q_3,q_4}\right\},
$$
where the three blocks of $d$-invariants on $\Sol$ are grouped by non-zero value of $c_1$.  We have a similar statement for $\Solprime$.  If $\mathfrak{s}_1$ and $\mathfrak{s}_2$ on $\Sol$ have the same first Chern class, then the corresponding spin$^c$ structures on $\Solprime$ under the homology cobordism have the same first Chern class as well.   This means that there must be an identification of the above blocks for $\Sol$ and $\Solprime$.  Note that the identification of $H^2(\Sol)$ with $\Z/4 \oplus \Z/4$ is non-canonical, so we do not know a priori how the triples of blocks for $\Sol$ and $\Solprime$ are supposed to match under the homology cobordism.

There are three subcases to consider.
\begin{enumerate}[label=Case \Roman*:,align=left]
\item The $b$-block and $b'$-block correspond.  
This only happens if $b'=b$.  Then \Cref{d-sum-sol} implies $a'=a$.  
\item The $b$-block and $a'$-block correspond.  
This implies that $-b=a'$ and so $a=-b'$ by \Cref{d-sum-sol}.  Therefore, $\Sol$ and $\Solprime$ agree by \Cref{woop}.  
\item The $b$-block and $q'$-block correspond.  
Then the $a$-block corresponds to either the $a'$-block or $b'$-block.  As above, we have that $\Sol$ and $\Solprime$ agree.  
\end{enumerate}
\end{proof}

\appendix
\section{Diagrams}\label{diagrams}
Our arguments rely on a careful analysis of the rational homology cobordisms relating $\Sol$ with dihedral manifolds. More precisely, they rely on a detailed understanding of the following diagram:
\begin{equation}\label{diag:H2}
\xymatrix@R=0ex{
&H^2(W_{-b})\ar[r]^\db\ar[dl]_\wb&H^2(D_{-b})\\
H^2(\Sol)&&\\
&H^2(W_a)\ar[r]^\da\ar[ul]_\wa&H^2(D_a)\\
}\end{equation}
In this appendix we include the different versions of \Cref{diag:H2} according to the parities of the parameters of $\Sol$. We remark that the case $a\equiv 1,\, b\equiv 0 \pmod{2}$ is analogous to the case $a\equiv 0,\, b\equiv 1 \pmod{2}$ and so we skip the former.

\subsection{$a, b \equiv 0 \pmod{2}$}
$$\xymatrix@R=0ex@C=0.7in{
&\Z/4\oplus\Z/2\ar[r]^{\db=\left(\begin{smallmatrix}
1 & 0 \\
0 & 1
\end{smallmatrix}\right)}\ar[dl]_{\wb=\left(\begin{smallmatrix}
1 & 0 \\
0 & 2
\end{smallmatrix}\right) }&\Z/2\oplus\Z/2\\
\Z/4\oplus\Z/4&&\\
&\Z/2\oplus\Z/4\ar[r]_{\da=\left(\begin{smallmatrix}
1 & 0 \\
0 & 1
\end{smallmatrix}\right)}\ar[ul]^{\wa=\left(\begin{smallmatrix}
2& 0 \\
0 & 1
\end{smallmatrix}\right)
}&\Z/2\oplus\Z/2\\
}$$

\subsection{$a\equiv 0,\, b\equiv 1 \pmod{2}$}
$$\xymatrix@R=0ex@C=0.7in{
&\Z/2\oplus\Z/4\ar[r]^{\db=\left(\begin{smallmatrix}
0 &1
\end{smallmatrix}\right) }\ar[dl]_{ \wb=\left(\begin{smallmatrix}
2 & 0 \\
0 & 1
\end{smallmatrix}\right) }&\Z/4\\
\Z/4\oplus\Z/4&&\\
&\Z/2\oplus\Z/4\ar[r]_{\da=\left(\begin{smallmatrix}
1 & 0 \\
0 & 1
\end{smallmatrix}\right)}\ar[ul]^{\wa=\left(\begin{smallmatrix}
2 & 1 \\
0 & 1
\end{smallmatrix}\right)}&\Z/2\oplus\Z/2\\
}$$

\subsection{$a, b \equiv 1 \pmod{2}$}
$$\xymatrix@R=0ex@C=0.7in{
&\Z/2\oplus\Z/4\ar[r]^{\db=\left(\begin{smallmatrix}
0 &1
\end{smallmatrix}\right) }\ar[dl]_{\wb=\left(\begin{smallmatrix}
0 & 0  \\
1& 0 \\
0&1
\end{smallmatrix}\right) }&\Z/4\\
\Z/2\oplus\Z/2\oplus \Z/4&&\\
&\Z/2\oplus\Z/4\ar[r]_{\da=\left(\begin{smallmatrix}
2 & -1
\end{smallmatrix}\right)}\ar[ul]^{\wa=\left(\begin{smallmatrix}
0 & 1  \\
1& 0 \\
0&1
\end{smallmatrix}\right) }&\Z/4\\
}$$

\bibliographystyle{plain}
\bibliography{sol_refs}
\end{document}